\newcommand{\termin}[1]{{\em #1}}   
\newcommand{\bA}{\mathbf{A}}\newcommand{\bC}{\mathbf{C}}\newcommand{\bD}{\mathbf{D}}
\newcommand{\bE}{\mathbf{E}}\newcommand{\bF}{\mathbf{F}}
\newcommand{\bN}{\mathbf{N}}
\newcommand{\bQ}{\mathbf{Q}}\newcommand{\bR}{\mathbf{R}}
\newcommand{\bZ}{\mathbf{Z}}
\newcommand{\ba}{\boldsymbol{a}}\newcommand{\bd}{\boldsymbol{d}}
\newcommand{\be}{\boldsymbol{e}}\newcommand{\bbf}{\boldsymbol{f}}\newcommand{\bg}{\boldsymbol{g}}
\newcommand{\bm}{\boldsymbol{m}}
\newcommand{\bt}{\boldsymbol{t}}
\newcommand{\balpha}{\boldsymbol{\alpha}}
\newcommand{\bgamma}{\boldsymbol{\gamma}}
\newcommand{\bnu}{\boldsymbol{\nu}}
\newcommand{\bmu}{\boldsymbol{\mu}}
\newcommand{\cC}{{\mathcal C}}\newcommand{\cD}{{\mathcal D}}
\newcommand{\cF}{{\mathcal F}}\newcommand{\cG}{{\mathcal G}}\newcommand{\cH}{{\mathcal H}}
\newcommand{\cP}{{\mathcal P}}
\DeclareMathAlphabet{\eulercal}{U}{eus}{m}{n}
\newcommand{\ecC}{{\eulercal C}}\newcommand{\ecD}{{\eulercal D}}
\newcommand{\ecE}{{\eulercal E}}\newcommand{\ecF}{{\eulercal F}}
\newcommand{\ecG}{{\eulercal G}}
\newcommand{\ecO}{{\eulercal O}}
\DeclareMathAlphabet{\beulercal}{U}{eus}{b}{n}
\newcommand{\becD}{{\beulercal D}}
\newcommand{\becE}{{\beulercal E}}\newcommand{\becF}{{\beulercal F}}
\newcommand{\becG}{{\beulercal G}}
\newcommand{\mfI}{\mathfrak{I}}
\newcommand{\scC}{\mathscr{C}}\newcommand{\scD}{\mathscr{D}}
\newcommand{\scE}{\mathscr{E}}\newcommand{\scF}{\mathscr{F}}\newcommand{\scG}{\mathscr{G}}
\newcommand{\scI}{\mathscr{I}}\newcommand{\scK}{\mathscr{K}}
\newcommand{\scM}{\mathscr{M}}\newcommand{\scN}{\mathscr{N}}\newcommand{\scO}{\mathscr{O}}\newcommand{\scP}{\mathscr{P}}
\newcommand{\scT}{\mathscr{T}}
\newcommand{\imply}{\Rightarrow}
\newcommand{\longto}{\longrightarrow}
\DeclareMathOperator{\Pic}{Pic}
\DeclareMathOperator{\Div}{Div}
\DeclareMathOperator{\dist}{dist}
\DeclareMathOperator{\rg}{rg}
\DeclareMathOperator{\rk}{rk}
\DeclareMathOperator{\Min}{Min}
\DeclareMathOperator{\Max}{Max}
\DeclareMathOperator{\Spec}{Spec}
\DeclareMathOperator{\pgcd}{pgcd}
\DeclareMathOperator{\Cox}{Cox}
\DeclareMathOperator{\Vol}{Vol}
\let\leq\leqslant
\let\geq\geqslant
\newcommand{\sumu}[1]{\underset{#1}{\sum}}
\newcommand{\sumsub}[1]{\sum_{\substack{#1}}}
\newcommand{\produ}[1]{\underset{#1}{\prod}}
\newcommand{\prodsub}[1]{\prod_{\substack{#1}}}
\newcommand{\cupu}[1]{\underset{#1}{\cup}}
\newcommand{\capu}[1]{\underset{#1}{\cap}}
\newcommand{\disju}[1]{\underset{#1}{\bigsqcup}}
\newcommand{\Maxu}[1]{\underset{#1}{\Max}}
\newcommand{\Minu}[1]{\underset{#1}{\Min}}
\newcommand{\pgcdu}[1]{\underset{#1}{\pgcd}}
\newcommand{\eps}{\varepsilon}
\newcommand{\vide}{\varnothing}
\newcommand{\eqdef}{\overset{\text{{\tiny{déf}}}}{=}}
\newcommand{\ind}{\mathbf{1}}
\newcommand{\wt}{\widetilde}
\newcommand{\acc}[2]{\left\langle #1\, ,\,#2 \right\rangle} 
\newcommand{\norm}[1]{\left|\left| #1 \right|\right|} 
\newcommand{\abs}[1]{\left| #1 \right|} 
\newcommand{\adh}[1]{\overline{#1}}
\newcommand{\bigou}[2]{\underset{{#1}}{\ecO}\left(#2\right)} 
\newcommand{\ceff}{C_{\text{\textnormal{\tiny{eff}}}}}
\newcommand{\cf}{{\it cf.}\  }
\newcommand{\eg}{{\it e.g.}\ }
\newcommand{\opcit}{{\it op.cit.}}
\newtheorem{thm}{Th\'eor\`eme}[section]
\newtheorem{prop}[thm]{Proposition}
\newtheorem{lemme}[thm]{Lemme}
\newtheorem{cor}[thm]{Corollaire}
\newtheorem{hyp}[thm]{Hypoth\`ese}
\theoremstyle{definition}
\newtheorem{defi}[thm]{D\'efinition}
\theoremstyle{remark}
\newtheorem{ex}[thm]{Exemple}
\newtheorem{rem}[thm]{Remarque}
\DeclareMathOperator{\HOM}{\textbf{Hom}}
\newcommand{\sumsuby}[1]{\sum^y_{\substack{#1}}}
\newcommand{\scan}[1]{s_{#1}} 
\newcommand{\idex}{\mathscr{I}_X}
\newcommand{\jz}{j_{_{0}}}
\newcommand{\ju}{j_{_{1}}}
\newcommand{\jd}{j_{_{2}}}
\newcommand{\iz}{i_{_{0}}}
\newcommand{\iu}{i_{_{1}}}
\newcommand{\ideux}{i_{_{2}}}
\newcommand{\cpr}{\scC_{p,r}}
\newcommand{\cprz}{\scC_{p,r}^{(0)}}
\newcommand{\gc}{g_{{}_{\courbe}}}
\newcommand{\diveff}{\Div_{\text{\textnormal{eff}}}}
\newcommand{\diveffcpr}{\diveff(\cpr)}
\newcommand{\courbe}{\scC}
\newcommand{\courbez}{\scC^{(0)}}
\newcommand{\diveffc}{\diveff(\courbe)}
\newcommand{\cprinc}{c_{\text{\textnormal{princ}}}} 
\newcommand{\hc}{h_{{}_{\courbe}}}
\newcommand{\tors}{\scT}
\newcommand{\fonc}{\scN}
\newcommand{\foncpr}{\scN_{_{p,r}}}
\newcommand{\foncjzpr}{\scN_{_{\jz,p,r}}}
\newcommand{\indsec}{\mfI}
\newcommand{\OdeC}{\struct{\courbe}}
\newcommand{\struct}[1]{\scO_{#1}}
\newcommand{\can}[1]{\scK_{#1}}
\newcommand{\Dtot}{\scD_{\text{tot}}}
\newcommand{\classadm}{\ecC}
\newcommand{\classadminc}{\classadm_{\text{inc}}}
\newcommand{\muX}{\mu_{_X}}
\newcommand{\muXpr}{\mu_{_{X,p,r}}}
\newcommand{\muzX}{\mu^{\circ}_{_X}}
\newcommand{\nuXJ}{\nu_{_{X,J}}}
\newcommand{\nuXJpr}{\nu_{_{X,J,p,r}}}
\newcommand{\nuzXJ}{\nu^{\circ}_{_{X,J}}}
\newcommand{\Nindprim}{\bN^{\indsec}_{\text{inc}}}
\newcommand{\fpr}{\bF_{\!p^{\scriptscriptstyle r}}}
\title{Exemples de comptages de courbes sur les surfaces}
\author{David Bourqui}
\address{IRMAR\\Université de Rennes 1\\Campus de Beaulieu\\35042
  Rennes Cedex\\France}
\email{david.bourqui@univ-rennes1.fr}
\begin{document}
\maketitle
\begin{abstract}
Soit $X$ une surface dont l'anneau de Cox a une seule relation, laquelle
vérifie en outre une certaine propriété de linéarité.
Nous montrons, sous une hypothèse simple, que les conjectures de Manin
géométriques valent pour certains degrés du cône effectif dual de $X$
(notamment pour ces degrés l'espace de modules de morphismes a la
dimension attendue).
Le résultat s'applique  à une classe de
surfaces de del Pezzo généralisées qui a été intensément étudiée dans
le cadre de la conjecture de Manin arithmétique.

~

\textbf{{\em Some examples of curves countings on surfaces.}} 
Let $X$ be a surface whose Cox ring has a single relation 
satisfying moreover a kind of linearity property. Under a simple
assumption, we show that the geometric Manin's conjectures hold for
some degrees lying in the dual of the effective cone of $X$
(in particular, for those degrees the moduli space of morphisms has the expected dimension).
The result applies to a class of generalized del Pezzo
surfaces which has been intensively studied in the context of the
arithmetic Manin's conjecture.
\end{abstract}
\textbf{MSC}: 11G50, 14J26, 14C20, 14D22, 14H10
\section{Introduction}\label{sec:intro}
Dans ce texte, nous poursuivons notre investigation de la
conjecture de Manin sur le comportement asymptotique du nombre de
courbes de degr\'e born\'e sur les
\og hypersurfaces intrinsèques linéaires\fg, c'est-à-dire 
les variétés dont l'anneau total de coordonn\'ees s'identifie \`a l'anneau d'une
hypersurface affine dont l'équation vérifie en outre une certaine propriété de linéarité.
Nous renvoyons à la section \ref{sec:Cox} pour plus de précisions sur la
terminologie. Signalons d'ores et déjà que la classification de
Derenthal des surfaces de del Pezzo généralisées de degré $\geq 3$
dont l'anneau total de
coordonnées n'a qu'une équation (\cf \cite{Der:sdp:ut:hyp}) montre que les 21 surfaces
qu'il obtient tombent toutes dans la classe considérée, sauf
une (l'une des deux surfaces cubiques avec singularité $\bD_4$). Ces surfaces ont été
intensément étudiées dans le cadre de la conjecture de Manin
arithmétique (évaluation asymptotique du nombre de points rationnels
de hauteur bornée lorsque le corps de base est le corps des
rationnels), \cf \cite{dlBBD,dlBBr:sdP4I,DerTsc:univ2007,BroDer:DP4A4,Der:int,BroDer:dP3D5,Lou:dP6A2:2010,LeBou:dP43A1A1A2:2012,LeBou:dP32A2A1:2011}

Le résultat principal obtenu dans cet article est le suivant.
Nous l'énonçons dans cette introduction sous une forme volontairement simplifiée.
\begin{thm}
Soit $k$ un corps parfait et $\courbe$ une $k$-courbe projective, lisse et g\'eom\'etriquement
int\`egre de genre $\gc$. 

Soit $X$ une surface projective, lisse 
et g\'eom\'etriquement int\`egre d\'efinie sur $k$ qui est une
hypersurface intrinsèque linéaire, et qui vérifie en outre l'hypothèse
\ref{hyp:noninter} ci-dessous.

Alors $X$ satisfait une version partielle des conjectures de Manin géométriques.
\end{thm}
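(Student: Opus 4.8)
Le plan est de ramener le comptage des morphismes $\courbe\to X$ de classe de courbe fixée à un comptage de familles de sections sur $\courbe$, via une descente sur le torseur universel, puis d'exploiter la linéarité de l'unique relation de l'anneau de Cox pour isoler et évaluer explicitement le terme principal. Plus précisément : $X$ étant une hypersurface intrinsèque linéaire (au sens de la section \ref{sec:Cox}), on a $\Cox(X)=k[T_1,\dots,T_n]/(F)$ où chaque monôme de $F$ fait intervenir exactement une variable $U_j$ d'un certain sous-ensemble, à la puissance un, multipliée par un monôme $M_j$ dans les variables restantes, soit $F=\sum_j U_j M_j$ ; le torseur universel $\scT_X\to X$ s'obtient en ôtant de $\Spec\Cox(X)$ le lieu instable, de codimension $\geq 2$. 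Un morphisme $\courbe\to X$ de classe $\by\in\Pic(X)$ se relève, modulo l'action du tore $\TNS$ et, le cas échéant, après sommation sur les classes de $\TNS$-torseurs au-dessus de $\courbe$, en une famille $(s_i)_{1\leq i\leq n}$ de sections de fibrés en droites $\scL_i$ sur $\courbe$ de degrés prescrits par $\by$, soumise à la relation $F(s_1,\dots,s_n)=0$ dans $H^0(\courbe,\scM_\by)$ — où $\scM_\by$ est le fibré image de $F$ — ainsi qu'à des conditions locales de coprimalité en les points fermés de $\courbe$ garantissant que le relèvement évite le lieu instable. L'hypothèse \ref{hyp:noninter}, qui exprime la non-rencontre de certains lieux attachés à la relation, sert à rendre cette liste de conditions finie et essentiellement indépendante d'un point fermé à l'autre.

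Pour le terme principal, j'utiliserais de façon essentielle la linéarité de $F$. Les sections \og libres\fg\ (celles figurant dans les $M_j$) étant fixées, notons $m_j$ le monôme $M_j$ évalué en ces sections ; l'ensemble des $(u_j)=(s_{U_j})$ tels que $\sum_j u_j m_j=0$ est le $H^0$ du noyau du morphisme de faisceaux $\bigoplus_j\scL_{U_j}\to\scM_\by$, $(u_j)\mapsto\sum_j u_j m_j$. Pour $\by$ dans le sous-cône voulu du cône effectif dual $\Eff(X)^\vee$, les $m_j$ sont sans zéro commun sur $\courbe$ et les degrés en jeu assez grands, de sorte que ce morphisme est surjectif sur les sections globales et que le $H^1$ du noyau s'annule. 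Riemann--Roch donne alors que l'espace des solutions est lisse, irréductible (c'est un fibré en espaces affines au-dessus d'un ouvert d'un espace affine) et de dimension $\acc{-K_X}{\by}+(1-\gc)\dim X$ — soit la dimension attendue de l'espace des morphismes de classe $\by$ —, et fournit simultanément sa classe dans l'anneau de Grothendieck : $\bL^{e}$ fois le produit sur les points fermés de $\courbe$ des densités locales de coprimalité, où $e=\acc{-K_X}{\by}+(1-\gc)\dim X$ et $\bL$ est la classe de la droite affine (ce qui, lorsque $k$ est fini, se spécialise en un comptage de points).

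Il faudrait ensuite contrôler les strates écartées, ce qui devrait être la partie la plus délicate : le lieu où une condition de coprimalité est en défaut, et le lieu où les coefficients $m_j$ dégénèrent (zéro commun, sections liées), ces deux cas faisant échouer respectivement la descente et la surjectivité de l'étape précédente. Je stratifierais en conséquence l'espace des familles $(s_i)$ et majorerais la dimension de chaque strate, en réutilisant les positivités fournies par l'hypothèse \ref{hyp:noninter} et la restriction de $\by$ au sous-cône adéquat, afin d'établir que chacune est de dimension strictement inférieure à $e$, donc de classe motivique négligeable devant $\bL^{e}$.

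Il ne resterait plus qu'à sommer sur les classes $\by\in\Pic(X)$ admissibles de degré anticanonique au plus $B$ : le produit des densités locales se recolle en une valeur de fonction zêta de $\courbe$, ce qui donne le facteur constant de type Peyre géométrique, et le décompte de ces classes $\by$ produit le facteur polynomial $B^{\rk\Pic(X)-1}$ ; on obtient ainsi l'asymptotique prédite par les conjectures de Manin géométriques pour les degrés considérés — de même que l'énoncé sur la dimension attendue de l'espace de modules —, soit la version partielle annoncée. Je m'attends à ce que l'obstacle principal soit l'uniformité de la majoration des strates dégénérées sur toute la plage de degrés visée, et c'est là que l'hypothèse \ref{hyp:noninter} et le choix du sous-cône du cône effectif dual devraient être décisifs.
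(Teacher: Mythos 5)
Votre schéma général --- relèvement au torseur universel, exploitation de la linéarité de la relation de Cox pour compter les sections du noyau par Riemann--Roch, puis contrôle des strates dégénérées --- est bien celui du texte, et votre lemme de comptage (surjectivité sur les sections globales et annulation du $H^1$ quand les degrés des coefficients $m_j$ sont assez grands) correspond au lemme \ref{lm:compt}. Mais deux étapes essentielles restent en blanc et ne sont pas de simples vérifications de routine. D'une part, la sommation sur les classes $y$ et l'identification de la constante ne se réduisent pas à un recollement des densités locales en une valeur de fonction zêta de $\courbe$ : les conditions de coprimalité se traitent par une inversion de Möbius (ici \emph{partielle}, ne portant que sur les variables hors de $J$), et tout le travail consiste à montrer que les séries génératrices qui en résultent sont suffisamment contrôlées pour que le produit eulérien converge vers $\gamma(X)$ (via le calcul de $\#\tors_{X,\be}(k)$ du lemme \ref{lm:nbre:pts:tors} et l'identité \eqref{eq:sum:becG:gamma}) et que les restes soient sommables uniformément en $y$. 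C'est là, et non pour rendre finie la liste des conditions locales, qu'intervient l'hypothèse \ref{hyp:noninter} : elle borne le support et la croissance des coefficients $\nuzXJ(\bg,\bbf)$ lorsque tous les $g_j+\sum_{i\in I_j}a_if_i$ valent au moins $2$ (propositions \ref{prop:calc:sergen} et \ref{prop:res:sergen}). Rien dans votre proposition ne remplace cette analyse.

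D'autre part, votre plan pour les strates écartées --- majorer la dimension de chacune pour montrer qu'elle est strictement inférieure à $e$ --- est précisément ce qui échoue en général : lorsque les degrés $\acc{y}{\scG_j}$ sont trop petits, le morphisme sur les sections globales n'est plus surjectif, des syzygies en bas degré apparaissent, et la seule majoration disponible (celle de \eqref{eq:maj:grossiere} pour le terme $n^J_2$ du théorème \ref{thm:main}) n'est un terme d'erreur que sur les cônes $\cC_{\lambda}$ définis par $\acc{y}{(1-\lambda)(\scG_{\ju}+\scG_{\jd})-\Dtot}\geq 0$. Votre restriction à un sous-cône va dans le bon sens, mais il faut l'expliciter et reconnaître que le complémentaire n'est pas traité : c'est exactement la partialité de l'énoncé, et votre rédaction laisse croire que la stratification aboutirait partout. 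Enfin, la version motivique que vous esquissez (classes dans l'anneau de Grothendieck) est strictement plus forte que ce que le texte démontre, lequel procède par comptage de points sur les corps finis et réduction à la Weil--Deligne (lemme \ref{lm:weil}) ; l'adopter ajouterait une difficulté que rien dans votre argument ne résout.
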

Le résultat est donné sous sa forme précise par le théorème \ref{thm:main}.
Nous nous contentons ici de quelques commentaires. 

Tout d'abord, dans cet article, l'appellation \og conjectures de Manin géométriques \fg\
recouvrira d'une part un cadre 
où le corps $k$ est supposé fini
et où on s'intéresse à la problématique du comptage asympotique de courbes de
degré borné, et d'autre part un cadre où le corps $k$ peut être a
priori quelconque, et où on étudie la dimension et le nombre de composantes des espaces de
modules de morphismes de courbes de degré donné. 
La section \ref{sec:conj} contient plus de détails sur la forme
précise des conjectures étudiées dans ce texte. 

Ensuite, la partialité évoquée dans l'énoncé ci-dessus
est explicitement quantifiable~: le théorème dit en gros que
pour les cas considérés, la conjecture de Manin vaut sur une certaine partie explicite du dual
$\ceff(X)^{\vee}$ du cône effectif de $X$. Par exemple le résultat dit que sur cette partie, l'espace de modules
de morphismes est irréductible et a la dimension attendue.

Pour deux surfaces de la liste de Derenthal (en fait pour trois, mais
pour la troisième le résultat était déjà connu d'après nos travaux
précédents) on obtient les conjectures de Manin \og complètes\fg,
c'est-à-dire que les conjectures
valent sur $\ceff(X)^{\vee}$,
et pour cinq autres on obtient une version \og complète à la limite\fg\ de
ces conjectures (on peut trouver des parties --en fait des réunions de
cônes-- sur lesquelles les conjectures valent et dont le volume est aussi proche que l'on veut de
celui de $\ceff(X)^{\vee}$), \cf le corollaire \ref{cor:manin:delpezzo}. Pour toutes les autres, on peut calculer
explicitement la proportion de $\ceff(X)^{\vee}$ sur laquelle notre
méthode montre que les conjectures valent, \cf la table \ref{tab:results}.
En particulier, les résultats de \cite{Bou:moduli} (qui concernaient
le cadre de l'étude de la dimension des espaces de modules)
sont améliorés. Nous renvoyons à la section \ref{sec:res:main} pour plus de précisions.

Enfin, il faut souligner que le théorème \ref{thm:main} isole le terme principal attendu et
identifie explicitement le \og terme d'erreur\fg\ qui est la cause de
la partialité dans l'énoncé ci-dessus. Il montre en que les
conjectures complètes sont vérifiées si et seulement si ce dernier terme
peut être contrôlé de manière ad hoc. La méthode employée
dans ce texte ne nous permet d'obtenir ce contrôle que sur certaines régions du dual
du cône effectif, celles où les coefficients de l'équation de l'anneau
de coordonnées total de $X$ sont de degré \og suffisamment
grand\fg. Si cette dernière condition n'est pas vérifiée, il
faut tenir compte des syzygies en bas degré, ce qui semble délicat
(mais peut-être pas insurmontable).

Le schéma général de la  stratégie  employée dans cet article est le
même que pour \cite{Bou:compt,Bou:fam,Bou:moduli}~: relèvement du problème de comptage en
termes de l'anneau total de coordonnées de $X$, inversion de Möbius,
comptage de sections globales vérifiant certaines équations linéaires
à l'aide de Riemann-Roch et d'algèbre linéaire élémentaire et étude
des séries génératrices en découlant.

Cependant, dans le présent travail, l'utilisation d'une
inversion de Möbius \og partielle\fg\ et non plus totale jointe à une étude plus fine des séries
génératrices intervenant permettent d'affaiblir et de simplifier de manière significative les hypothèses qui
étaient nécessaires dans \cite{Bou:compt,Bou:fam,Bou:moduli}, dont certaines exigeaient des
calculs lourds et peu éclairants pour leur éventuelle vérification, et
n'étaient d'ailleurs pas vérifiées pour toutes les surfaces de la
liste de Derenthal. 
La seule hypothèse nécessaire est ici l'hypothèse \ref{hyp:noninter}, 
qui est très simple à vérifier pour peu qu'on ait une description
explicite de l'anneau de coordonées total de la variété considérée.
On voit ainsi aussitôt qu'elle est satisfaite par les 20 surfaces
de la liste de Derenthal. Peut-être pourrait-t-on d'ailleurs se risquer à espérer qu'elle est satisfaite
pour toutes les hypersurfaces intrinsèques linéaires. On montre
notamment sous cette seule hypothèse que le terme principal de la
fonction zêta des hauteurs correspond bien à celui prédit par Peyre
(\cf la relation \eqref{eq:sum:becG:gamma}).
Dans \cite{Bou:fam}, cette propriété nécessitait a priori une
vérification calculatoire au cas par cas (\cf la remarque 11 de \opcit).

Il est à noter que le résultat final est démontré pour les surfaces,
mais que beaucoup d'arguments, notamment ceux utilisés pour le calcul
du terme principal, sont valables en toute dimension, et sont donnés
dans ce cadre. Il est sans doute possible, au prix de complications
techniques, d'étendre les résultats obtenus en dimension supérieure.
Il faut remarquer que peu d'exemples explicites d'hypersurfaces
intrinsèques linéaires en dimension supérieure semblent connus.

\section{Conjectures de Manin g\'eom\'etriques}\label{sec:conj}
Dans tout cet article, $k$ désigne un corps parfait et $\courbe$ une $k$-courbe projective, lisse et g\'eom\'etriquement
int\`egre, dont on note $\gc$ le genre. Soit $\diveffc$ le monoïde
des diviseurs effectifs de $\courbe$ définis sur $k$.
On note $\diveffc_{\leq 1}$ le sous-ensemble 
de $\diveffc$ constitué des diviseurs dont toutes
les multiplicités valent au plus $1$.

Soit $X$ une $k$-vari\'et\'e projective, lisse 
et g\'eom\'etriquement int\`egre d\'efinie sur $k$. 
On suppose que son groupe de Picard g\'eom\'etrique est libre de rang fini
et d\'eploy\'e, c'est-à-dire que l'action du groupe de Galois absolu est triviale.
On suppose en outre que $-\can{X}$, la classe du faisceau anticanonique de $X$ dans le groupe de Picard, 
est situ\'ee \`a l'int\'erieur du c\^one effectif $\ceff(X)$ de $X$.
On suppose par ailleurs que $\ceff(X)$ est finiment engendré.

Pour $U$ ouvert de Zariski non vide de $X$ assez petit 
et pour tout  élément $y$ du dual  $\Pic(X)^{\vee}$ du groupe de
Picard de $X$ soit $\HOM_U(\courbe,X,y)$
la $k$-variété quasi-projective paramétrisant
les morphismes $f\,:\,\courbe\to X$ 
dont l'image rencontre $U$
et de degré absolu $[f_{\ast}\courbe]=y$.

Si $k$ est un corps fini de cardinal $q$,
on note $\hc$ le nombre de classes de diviseurs de degr\'e $0$
et $\courbe^{(0)}$ l'ensemble des points ferm\'es de $\courbe$. 
Pour $v\in \courbe^{(0)}$, on note
$\kappa_v$ le corps r\'esiduel et $q_v=q^{\,f_v}$ son cardinal.
On pose alors
\begin{equation}\label{eq:def:gamma}
\gamma(X)
\eqdef
\left(\frac{\hc\,q^{(1-\gc)}}{q-1}\right)^{\rg(\Pic(X))}\:
q^{(1-\gc)\,\dim(X)}\,
\prod_{v\in\courbe^{(0)}}
(1-q_v^{-1})^{\rg(\Pic(X))}\,\frac{\#{X(\kappa_{v})}}{q_v^{\,\dim(X)}}.
\end{equation}
L'interprétation conceptuelle de $\gamma(X)$ se fait en termes du
volume de l'espace adélique associé à la variété $X\times_k k(\courbe)$ pour une certaine mesure de
Tamagawa, \cf \cite{Pey:duke,Pey:var_drap}. Sous les seules hypothèses que nous avons données, la
convergence du produit eulérien dans \eqref{eq:def:gamma} n'est a
priori pas assurée. Nous renvoyons à \cite{Pey:var_drap} pour des précisions
sur les hypothèses permettant de montrer la convergence en
utilisant Weil-Deligne. Pour la classe de
variétés étudiée dans cet article, ces hypothèses sont vérifiées et la
convergence du produit peut d'ailleurs se voir de manière élémentaire.

\begin{defi}
Soit $(a_n)\in \bC^{\bN}$, $\rho>0$ et $k\in \bN$.
On dit que la série $\sum a_n\,t^n$ est $\rho$-contrôlée à l'ordre $r$
s'il existe $(b_n)\in \bR^{\bN}$ telle que $\abs{a_n}\leq b_n$,
le rayon de convergence de $\sum b_nt^n$ est sup\'erieur \`a $\rho$
et sa somme se prolonge en une fonction m\'eromorphe
sur un disque de rayon strictement sup\'erieur  \`a $\rho$,
ayant des p\^oles d'ordre au plus $r$ sur le cercle de rayon $\rho$.
\end{defi}
Voici les versions de la conjecture de Manin géométrique que nous
considèrerons dans cet article. 
Pour nous, un cône sera toujours
polyedral rationnel.
\begin{defi}
Soit $U$ un ouvert non vide de $X$ et $\cP$ une partie de
$\ceff(X)^{\vee}$ qui est une réunion finie de cônes.
\begin{enumerate}
\item
On suppose que $k$ est un corps fini. 
On dit que $(X,U)$ vérifie 
\og Manin I sur $\cP$\fg\
si la série
\begin{equation}
\sum_{y\in \cP\cap \Pic(X)^{\vee}}\,
(\# \HOM_U(\courbe,X,y)(k)
-\gamma(X)
q^{\,\acc{y}{-\can{X}}}
)
t^{\,\acc{y}{-\can{X}}}
\end{equation}
est $q^{-1}$-contr\^ol\'ee \`a l'ordre $\rg(\Pic(X))-1$,
et on dit que $(X,U)$ vérifie 
\og Manin II sur $\cP$\fg\
si on a
\begin{equation}
\lim_{
\substack{
y\in \cP\cap \Pic(X)^{\vee}
\\
\dist(y,\partial \cP)\to +\infty
}
}
q^{\,\acc{y}{\can{X}}}\,\# \HOM_U(\courbe,X,y)(k)=\gamma(X).
\end{equation}
On dit que $(X,U)$ vérifie 
\og Manin I fort sur $\cP$\fg\  si pour toute partie $\cP'$ de $\cP$
qui est une union finie de cônes, $(X,U)$ vérifie \og Manin I sur $\cP'$\fg.
\item
On dit que $(X,U)$ vérifie 
\og Manin géométrique sur $\cP$\fg\ 
s'il existe un réel positif $M$ (dépendant de $X$, $\courbe$ et $\cP$)
telle que pour tout  $y\in \ceff(X)^{\vee}\cap \Pic(X)^{\vee}$ 
vérifiant $\dist(y,\partial \cP)\geq M$, 
la variété $\HOM_U(\courbe,X,y)$ est géométriquement irréductible de
dimension $\acc{y}{-\can{X}}+(1-\gc)\dim(X)$.
\end{enumerate}
Pour $\alpha\in \{I,I\text{ fort},II,\text{géométrique}\}$, on dit que $(X,U)$ vérifie  
\og Manin $\alpha$\fg\ si $(X,U)$ vérifie 
\og Manin $\alpha$ sur $\ceff(X)^{\vee}$\fg.
\end{defi}
\begin{rem}
Notons $\rho(X)=\rg(\Pic(X))$ et, pour toute partie mesurable $\cP$ de
$\ceff(X)^{\vee}$, notons $\alpha_{\cP}(X)$ le volume de l'intersection de
$\cP$ avec l'hyperplan affine $\{\acc{\bullet}{-\can{X}}=1\}$ (la mesure étant
normalisée par le réseau $\Pic(X)^{\vee}$).
Soit $\delta\eqdef \Max\{d\in \bN_{>0},\quad \frac{1}{d}\can{X}\in
\Pic(X)\}$. Grâce aux estimées de Cauchy, on voit que \og Manin I sur $\cP$\fg\
équivaut à l'asymptotique suivante pour le nombre de
courbes de degré anticanonique borné~:
\begin{equation}
\sumsub{
y\in \cP\cap \Pic(X)^{\vee}
\\
\acc{y}{-\can{X}}=\delta\,d
}
\!\!\!\!\!
\!\!\!\!\!
\# \HOM_U(\courbe,X,y)(k)
=
\alpha_{\cP}(X)\gamma(X)\,
d^{\rho(X)-1}
q^{\,\delta\,d}
+
\bigou{d\to +\infty}{d^{\rho(X)-2}q^{\delta\,d}}
\end{equation}
Ainsi \og Manin I\fg\ équivaut essentiellement à la conjecture de
Manin originelle (notons que si $\ceff(X)$ est engendré par des
diviseurs de $X\setminus U$, ce qu'on peut toujours supposer si $U$
est assez petit, on a $\HOM_U(\courbe,X,y)=\vide$ si $y\notin
\ceff(X)^{\vee}$). 
\og Manin I fort\fg, tout comme \og Manin II\fg,
expriment une sorte de propriété d'équirépartition des degrés absolus des courbes de
degré anticanonique $d$ dans le cône effectif dual.

Notons qu'a priori aucune des deux propriétés \og Manin I fort\fg\ ou \og Manin II\fg\
n'entraîne l'autre. Cependant dans cet article, les cas que nous en
établirons découleront d'une même propriété plus forte satisfaite par
les $\# \HOM_U(\courbe,X,y)(k)$ (\cf la remarque \ref{rem:pte:plus:forte}).
\end{rem}
\begin{rem}
Pour Manin \og géométrique\fg, il est a priori concevable d'étudier des invariants \og motiviques\fg\
plus fins que la dimension et le nombre de composantes, comme c'est
fait dans \cite{Bou:prod:eul:mot} pour les variétés toriques.
\end{rem}

Rappelons que par une application standard de Weil-Deligne et la théorie
élémentaire de la déformation des morphismes (\cf \eg \cite{Deb:higher}), on a la
propriété suivante (on suppose pour simplifier que $k=\bQ$, mais par
les arguments habituels ce procédé de réduction aux corps finis peut
se réaliser sur un corps de base quelconque).
\begin{lemme}\label{lm:weil}
$\HOM_U(\courbe,X,y)$ est géométriquement irréductible de dimension $\acc{y}{-\can{X}}+(1-\gc)\dim(X)$
si et seulement si pour presque tout nombre premier $p$ on a
\begin{equation}
\lim_{
\substack{
r\to +\infty
}
}
p^{\,r[(\gc-1)\dim(X)+\acc{y}{\can{X}}]}\,\# \HOM_U(\courbe,X,y)(\fpr)
=1
\end{equation}
\end{lemme}

\section{Anneau de Cox, inversion de M\"obius, et rel\`evement au  torseur universel}\label{sec:Cox}

On conserve les notations et hypoth\`eses de la section pr\'ec\'edente.
On suppose en outre que l'anneau total de coordonnées ou anneau de Cox de $X$ 
(\cf \eg \cite{Has:eq:ut:cox:rings}),
not\'e $\Cox(X)$, est de type fini.
Soit $\{u_i\}_{i\in \indsec }$ 
une famille finie de sections globales (non constantes) 
qui engendre $\Cox(X)$. 
Pour $i\in \indsec $, on note
$\scE_i$ le diviseur des z\'eros de $u_i$.
Soit $X_0$ l'ouvert de $X$ \'egal au compl\'ementaire de la r\'eunion des
diviseurs $\{\scE_i\}_{i\in \indsec }$.

\subsection{Inversion de Möbius}
Soit 
\begin{equation}
\classadminc
\eqdef
\{K\subset \indsec,\quad \capu{i\in K}\scE_i\neq \vide\}
\end{equation}
et
\begin{equation}
\Nindprim
=
\{\be\in \bN^\indsec,\quad \{i\in \indsec,\quad e_i\geq 1\}\in \classadminc\}.
\end{equation}
On a la g\'en\'eralisation classique suivante de la formule
d'inversion de M\"obius (\cf \eg \cite[proposition 1.21]{Bou:compt}).
\begin{prop}\label{prop:mu}
Il existe une unique fonction $\muzX\,:\,\bN^{\indsec }\longto \bC$
v\'erifiant
\begin{equation}
\forall \be\in \bN^{\indsec},\quad
\sum_{0\leq \be'\leq \be}\muzX (\be')
=
\ind_{\Nindprim}.
\end{equation}
En particulier, on a $\muzX (\balpha)=0$ dans les cas suivants~:
\begin{enumerate}
\item
il existe $i\in \indsec $ tel que $\alpha_{i}\geq 2$~;
\item
$\balpha$ est non nul et l'intersection 
$\cap_{i,\,\,\alpha_i\neq 0}\,\scE_i$ est non vide~; 
ceci vaut en particulier si
on a 
$\sum_{i\in \indsec }\alpha_i=1$.
\end{enumerate}
\end{prop}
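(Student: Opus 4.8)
The plan is to construct $\muzX$ explicitly and directly, and then read off the vanishing assertions. First I would set up the Möbius‑type inversion on the poset $\bN^{\indsec}$ ordered by $\be' \leq \be$ componentwise. The classical observation is that for the full poset $\bN^{\indsec}$, summation over the interval $[0,\be]$ is invertible: given any target function $g : \bN^{\indsec}\to\bC$, there is a unique $f$ with $\sum_{0\leq\be'\leq\be}f(\be') = g(\be)$, obtained by the finite‑difference (inclusion–exclusion) formula $f(\be) = \sum_{0\leq\be'\leq\be}(-1)^{\abs{\be-\be'}}g(\be')$, where $\abs{\cdot}$ denotes the sum of coordinates. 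Applying this to $g = \ind_{\Nindprim}$ gives existence and uniqueness of $\muzX$ at once; this is exactly the cited generalisation of Möbius inversion, so I would invoke \cite[proposition 1.21]{Bou:compt} rather than reprove it.

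Next I would turn the finite‑difference formula into something tractable by using the product structure. The key point is that $\ind_{\Nindprim}(\be)$ depends on $\be$ only through its \emph{support} $S(\be) \eqdef \{i\in\indsec,\ e_i\geq 1\}$, and equals $\ind_{S(\be)\in\classadminc}$. Because $\classadminc$ is closed under passing to subsets (an intersection of a subfamily of the $\scE_i$ containing a nonempty intersection is itself nonempty — wait, this needs the other direction: if $\cap_{i\in K}\scE_i\neq\vide$ and $K'\subset K$ then $\cap_{i\in K'}\scE_i\supset\cap_{i\in K}\scE_i\neq\vide$, so yes $\classadminc$ is downward closed), the set $\Nindprim$ is itself downward closed in $\bN^{\indsec}$. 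This lets me compute the inverse componentwise. Concretely, I would first treat the $1$‑variable model: for a single coordinate, $\ind_{\Nindprim}$ restricted to that coordinate is $(1,1,1,\dots)$ on the relevant slice, whose finite‑difference inverse is $(1,-1,0,0,\dots)$. More precisely, I would prove that $\muzX(\balpha) = 0$ whenever some $\alpha_i\geq 2$ by the following telescoping argument: fix $\balpha$ with $\alpha_{i_0}\geq 2$ and write, for each $\be\leq\balpha$, the value $\ind_{\Nindprim}(\be)$ as $\ind_{\Nindprim}(\be)$; since membership in $\Nindprim$ is unchanged when the $i_0$‑coordinate is decreased from $\alpha_{i_0}$ to $\alpha_{i_0}-1$ (both are $\geq 1$), the alternating sum defining $\muzX(\balpha)$ splits along the $i_0$‑coordinate into a difference of two equal sums, hence vanishes. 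This establishes item (1).

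For item (2), suppose $\balpha\neq 0$ and $\cap_{i\in S(\balpha)}\scE_i\neq\vide$, i.e.\ $S(\balpha)\in\classadminc$. By item (1) I may assume all $\alpha_i\in\{0,1\}$, so $\balpha = \ind_{S}$ for $S = S(\balpha)$, and $\muzX(\ind_S) = \sum_{S'\subset S}(-1)^{\abs{S\setminus S'}}\ind_{S'\in\classadminc}$. Since $\classadminc$ is downward closed and $S\in\classadminc$, every subset $S'\subset S$ lies in $\classadminc$, so the sum is $\sum_{S'\subset S}(-1)^{\abs{S\setminus S'}} = (1-1)^{\abs{S}} = 0$ as $S\neq\vide$. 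The special case $\sum\alpha_i = 1$ follows because then $S = \{i\}$ is a singleton, and $\scE_i\neq\vide$ automatically (the $u_i$ are nonconstant global sections, so their zero divisors are nonempty), hence $\{i\}\in\classadminc$. I expect no real obstacle here: the only point requiring care is the bookkeeping that justifies the componentwise splitting in item (1), i.e.\ that the finite‑difference operator factors as a tensor product of one‑dimensional operators and that $\ind_{\Nindprim}$ is "constant enough" in the coordinates being differentiated twice; once that is stated cleanly the rest is a two‑line computation.
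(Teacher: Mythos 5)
The paper itself gives no proof here (it cites \cite[proposition 1.21]{Bou:compt}), so a self-contained argument is welcome, and the architecture of yours is the expected one: reduce $\ind_{\Nindprim}$ to a function of the support, use that $\classadminc$ is stable par passage aux sous-ensembles, and compute $(1-1)^{\abs{S}}=0$ for item (2). However, there is one concrete error: the explicit inversion formula $\muzX(\be)=\sum_{0\leq\be'\leq\be}(-1)^{\abs{\be-\be'}}\ind_{\Nindprim}(\be')$ is \emph{not} the inverse of the summation operator on $\bN^{\indsec}$. Already in one variable, applying your formula to the constant function $1$ yields $1,0,1,0,\dots$ instead of $1,0,0,\dots$. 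The Möbius function of the product of chains $\bN^{\indsec}$ equals $(-1)^{\abs{\be-\be'}}$ only when $\be-\be'\in\{0,1\}^{\indsec}$ and vanishes otherwise; the correct formula is $\muzX(\balpha)=\sum_{\bdelta\in\{0,1\}^{\indsec},\,\bdelta\leq\balpha}(-1)^{\abs{\bdelta}}\,\ind_{\Nindprim}(\balpha-\bdelta)$, i.e.\ the product over $i\in\indsec$ of the \emph{first} difference operators in each coordinate.

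This matters for item (1): with the formula as you wrote it, the ``difference of two equal sums'' does not materialize --- for $\alpha_{i_0}=2$ the inner alternating sum over $e'_{i_0}\in\{0,1,2\}$ carries signs $+,-,+$ and leaves the $e'_{i_0}=0$ term uncancelled, so the resulting expression is generically nonzero. With the corrected formula, the mechanism you describe works verbatim: pairing each $\bdelta$ with the vector obtained by increasing its $i_0$-th coordinate by $1$ produces $\ind_{\Nindprim}(\balpha-\bdelta)-\ind_{\Nindprim}(\balpha-\bdelta')$ where the two arguments have the same support because $\alpha_{i_0}\geq 2$, hence each pair contributes $0$. Everything else in your proposal is unaffected and correct: existence and uniqueness follow by induction on $\abs{\be}$ without any explicit formula; in item (2) you only evaluate $\muzX$ on $\{0,1\}^{\indsec}$, where your formula coincides with the correct one, and your observations that $\classadminc$ is downward closed and that $\scE_i\neq\vide$ because $u_i$ is a nonconstant section on a projective integral variety are both valid. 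So the gap is a single misstated formula, easily repaired, rather than a flaw in the approach.
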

Soit $J$ une partie de $\indsec$ et $I=\indsec \setminus J$.
Pour $\bg\in \bN^J$, $\bbf\in \bN^I$
on pose
\begin{equation}\label{eq:def:nu0}
\nuzXJ(\bg,\bbf)
\eqdef
\sum_{0\leq \bbf'\leq \bbf}
\muzX(\bg,\bbf').
\end{equation}
\begin{rem}\label{rem:nu0nonnul}
Si $(0,\bbf)\notin \Nindprim$, alors 
$\nuzXJ (\bg,\bbf)$ est nul pour tout $\bg$.
\end{rem}
\begin{rem}\label{rem:sum:bbfK:n0}
Pour $K\subset I$, soit $\bbf_{K}\in \{0,1\}^I$ l'élément
caractérisé par $f_{K,i}=1$ si et seulement si $i\in K$.

Soit $K\subset I$ tel qu'on ait, pour tout $j\in J$, 
$\{j\}\cup K\notin \classadminc$.
Alors pour toute partie {\em non vide} $J_1$ de $J$ et tout $\bg_2\in \{0,1\}^{J\setminus J_1}$
on a la relation
\begin{equation}
\sum_{\bg\in \{0,1\}^{J_1}}\nuzXJ (\bg_1,\bg_2,\bbf_{K})=0.
\end{equation}
Ceci découle des définitions de $\nuzXJ $ et $\muzX$ ainsi que d'une
récurrence sur $\abs{\bg_2}$.

Par ailleurs il découle facilement de la proposition \ref{prop:mu} que pour tout $\bbf\in \bN^I$ tel
que $\{i\in I,\quad f_i\geq 1\}=K$, on a $\nuzXJ(\bg,\bbf)=\nuzXJ(\bg,\bbf_K)$.
\end{rem}

Pour $\becD\in \diveffc^{\indsec}$, on pose 
\begin{equation}
\muX(\becD)\eqdef\prod_{v\in \courbez}\muzX(v(\becD)).
\end{equation}
Pour $\becG\in \diveffc^J$ et $\becD\in \diveffc^I$, on pose 
\begin{equation}\label{eq:def:nuXJ}
\nuXJ (\becG,\becD)
\eqdef
\sum_{0\leq \becD'\leq \becD}
\muX(\becG,\becD')
=\prod_v \nuzXJ (v(\becG,\becD)).
\end{equation}
\begin{rem}\label{rem:nuXJnul}
D'après la proposition \ref{prop:mu}, $\nuXJ (\becG,\becD)$ est nul
dès que $\becG\notin \diveffc^J_{\leq 1}$.
\end{rem}
Si $k=\bQ$, on désignera par $\muXpr$ et $\nuXJpr$ les fonctions sur $\diveffcpr^{\indsec}$
obtenues en remplaçant dans les définitions ci-dessus $\courbe$ par $\cpr$.

\subsection{Hypersurfaces intrinsèques linéaires}\label{subsec:hypintlin}
On conserve les hypoth\`eses et notations 
introduites pr\'ec\'edemment.
On dit que $X$ est une \termin{hypersurface intrinsèque linéaire}
si $\idex$ est principal et 
la propriété suivante est vérifiée~: il existe $J\subset \indsec$
et une famille $\{I_j\}_{j\in J}$ de parties non vide deux à deux
disjointes de $\indsec\setminus J$ 
tels que les classes $\{\ecE_i\}_{i\in \indsec\setminus J}$ 
forment une base de $\Pic(X)$
et, après une renormalisation éventuelle des $\{s_i\}_{i\in \indsec}$, 
un générateur de $\idex$ est de la forme
\begin{equation}\label{eq:cF}
\cF=\sum_{j\in J} s_j\,\prod_{i\in I_j}s_i^{b_{i,j}},\quad b_{i,j}\in \bN_{>0}.
\end{equation}
Tout $J\subset \indsec$ 
vérifiant la propriété précédente est appelé un \termin{choix admissible de variables}.
\begin{ex}
D'après \cite{Der:sdp:ut:hyp}, les 21 surfaces de del Pezzo
généralisées de degré $\geq 3$ dont l'anneau total de coordonnées n'a qu'une équation
sont toutes, à une exception près (l'une des deux cubiques
avec singularité $\bD_4$ apparaissant dans la classification),  des
hypersurfaces intrinsèques linéaires. Dans \cite{Bou:fam} est
construite une famille d'hypersurfaces intrinsèques linéaires de
dimension non bornée.
\end{ex}
On notera $\Dtot\in \Pic(X)$ le degré d'un générateur de $\idex$.
D'apr\`es \cite[proposition 8.5]{BerHau:Cox}, on a la formule d'adjonction suivante.
\begin{lemme}
La classe du fibr\'e anticanonique de $X$ est
\begin{equation}\label{eq:adj}
-\can{X}=\sum_{i\in \indsec}\scE_i-\Dtot.
\end{equation}
\end{lemme}

\subsection{Nombre de points sur le torseur universel}
Soit $X$ une hypersurface intrinsèque linéaire.
On note $\tors_{X}$ le schéma affine $\Spec(\bZ[s_i]_{i\in \indsec}/\cF)$
où $\cF$ est de la forme \eqref{eq:cF}.
Pour $\be\in \{0,1\}^{\indsec}$ on pose
\begin{equation}
\tors_{X,\be}=\tors_X\cap \capu{i\in \indsec,\,e_i=1}\{s_i=0\}.
\end{equation}
\begin{lemme}\label{lm:nbre:pts:tors}
Soit $k$ un corps fini de cardinal $q$. 
Soit $J$ un choix admissible de variables 
et $\be=(\bbf,\bg)\in \{0,1\}^{\indsec}$.
On a
\begin{equation}\label{eq:form:nbre:pts:tors}
\frac
{\# \tors_{X,\be}(k)}
{q^{\,\dim(\tors_{X})}}
=
q^{\,-\abs{\be}}
\,
\left[
1+(q-1)
\prodsub{
j\in J
\\
g_j+\sum_{i\in I_j}f_i=0
}
(1-(1-q^{-1})^{\# I_j})
\right].
\end{equation}
\end{lemme}
\begin{proof}
On peut supposer que $\be=0$ et que $\cup_{j\in J}I_j=I$, les autres
cas s'en déduisent facilement. Soit $(s_i)\in k^I$. Si pour tout
$j\in J$ il existe $i\in I_j$ tel que $s_i=0$ le nombre de solutions 
$(s_j)\in k^J$
de l'équation linéaire $\cF(s_i,s_j)=0$
est $q^{\# J}$. Dans le cas contraire, c'est $q^{\#J-1}$.
En sommant sur tous les $(s_i)\in k^I$, on obtient la relation 
\begin{equation}
\# \tors_X(k)=
q^{\# J}\prod_{j\in J}(q^{\# I_j}-(q-1)^{\# I_j})
+q^{\# J-1}[q^{\# I}-\prod_{j\in J}(q^{\# I_j}-(q-1)^{\# I_j})]
\end{equation}
Compte tenu de $\dim \tors_{X}=\# \indsec-1=\# I+\# J-1$, on en déduit bien la formule \eqref{eq:form:nbre:pts:tors}.
\end{proof}

\subsection{Relèvement du problème de comptage}
On conserve les hypothèses et notations précédentes.
Désormais, le corps de bas $k$ sera toujours soit un corps fini de
cardinal $q$ (avec en vue l'étude de \og Manin I\fg\ et \og Manin
II\fg), soit le corps $\bQ$ des rationnels 
(avec en vue l'étude de \og Manin géométrique\fg~; comme nous l'avons
déjà signalé, la stratégie utilisée est en fait applicable à un
corps de base quelconque mais nous nous limitons au cas des rationnels
par souci de simplicité et parce que les exemples explicites
d'application de notre résultat tombent dans ce cadre).

Soit $J\subset \indsec$ un choix admissible de variables.
On notera alors $I\eqdef \indsec\setminus J$.
\'Ecrivons, pour $j\in J$,
\begin{equation}
\scG_j=\sum_{i\in I} a_{i,j}\,\scF_i,\quad
a_{i,j}\in \bZ.
\end{equation}
Pour $\ecD\in \diveffc$, on note $\scan{\ecD}$ la section canonique de $\OdeC(\ecD)$.

On suppose $k$ fini.
Pour $\becD\in \diveffc^I$ et $\becG\in \diveffc^{J}$,
on d\'esigne par $\fonc(\becD,\becG)$ (respectivement
$\fonc^{\ast}(\becD,\becG)$, respectivement $\fonc_{\jz}(\becD,\becG)$
si $\jz\in J$ le cardinal de l'ensemble
des \'el\'ements $(t_j)_{j\in J}$
(respectivement des \'el\'ements $(t_j)_{j\in J}$ {\em tous non nuls},
respectivement des \'el\'ements $(t_j)_{j\in J}$ avec $t_{\jz}=0$)
du produit
\begin{equation}
\prod_{j\in J} 
H^0(\courbe,\struct{\courbe}
(-\ecG_j+\sum_{i\in I}a_{i,j}\,\ecD_i))
\end{equation}
v\'erifiant la relation
\begin{equation}
\cF(\scan{\ecD_i}\,,t_j\,\scan{\ecG_j})=0.
\end{equation}
Si $k=\bQ$, on définit de même les fonctions $\foncpr$, $\foncpr^{\ast}$
et $\foncjzpr$ en remplaçant dans ce qui précède $\courbe$ par $\cpr$.

Supposons $k$ fini.
Des r\'esultats des sections 1.3, 1.4 et 1.6 de \cite{Bou:compt} d\'ecoulent 
alors, pour tout $y\in\Pic(X)^{\vee}\cap \ceff(X)^{\vee}$,
la formule 
\begin{equation}\label{eq:form:relevement:2}
\# \HOM_{X_0}(\courbe,X,y)(k)
=
\sum_{
\substack{
\becE\in \diveffc^{\indsec}
\\
\deg(\ecG_j) \leq \acc{y}{\scG_j},\quad j\in J
\\
\deg(\ecF_i)\leq \acc{y}{\scF_i},\quad i\in I 
\\
\\
\deg(\ecD_i)= \acc{y}{\scF_i}-\deg(\ecF_i),\quad i\in I 
}
}
\muX(\becG,\becF)
\fonc^{\ast}(\becD+\becF,\becG).
\end{equation}
Pour alléger l'écriture, on adoptera dans toute la suite de cet
article la notation suivante~: soit $f$ une fonction à valeurs
complexes (respectivement $\scP$ une fonction booléenne) définie sur 
$(\Pic(X)^{\vee}\cap \ceff(X)^{\vee})\times \diveffc^{\indsec}$.
On pose alors 
\begin{equation}\label{eq:not:sumyscP}
\sum^y_{\scP}
f
=
\sumsub{
(\becG,\becD)\in \diveffc^{\indsec}
\\
\deg(\ecG_j) \leq \acc{y}{\scG_j},\quad j\in J
\\
\deg(\ecD_i)= \acc{y}{\scF_i},\quad i\in I 
\\
\scP(y,\becG,\becD)
}
f(y,\becG,\becD).
\end{equation}
Ainsi d'après \eqref{eq:form:relevement:2}
et \eqref{eq:def:nuXJ}, on a
\begin{equation}\label{eq:form:relevement}
\# \HOM_{X_0}(\courbe,X,y)(k)
=
\sum^y \nuXJ.\fonc^{\ast}.
\end{equation}
De même, si $k=\bQ$, on adopte pour tous $p,r$ une notation analogue à 
\eqref{eq:not:sumyscP} en remplaçant $\courbe$ par $\cpr$, et on a 
\begin{equation}\label{eq:form:relevement:pr}
\# \HOM_{X_0}(\courbe,X,y)(\fpr)
=
\sum^{y} \nuXJpr.\foncpr^{\ast}.
\end{equation}

\section{\'Etude de quelques  séries génératrices}

\begin{prop}\label{prop:calc:sergen}
Soit $J$ un ensemble fini non vide.
\begin{enumerate}
\item
Soit $I$ un ensemble fini non vide
et
$I=\disju{j\in J}I_j$ une partition de $I$ en ensembles non vides.
Soit $\ba\in \bN^I_{>0}$, $\bnu\in \bN^J$ et 
\begin{equation}
\cF(\rho,\ba,\bnu,\bt)
\eqdef
\sum_{\bd\in \bN^{I}}
\rho^{\,
\Minu{j\in J}(\nu_j+\sum_{i\in I_j}a_i\,d_i)
}
\bt^{\,\bd}
\end{equation}
(cette notation fait abstraction de la partition utilisée, qui sera
toujours clairement indiquée par le contexte).
Soit $\scI$ la classe des parties $K$ de $I$ telles que 
$K\cap I_j$ soit un singleton pour tout $j\in J$.
Pour tout $K\in \scI$ soit $m_{K}$ le plus petit commun multiple
des $(a_{i})_{i\in K}$.
Soit
\begin{equation}
\wt{\cF}(\rho,\ba,\bnu,\bt)
\eqdef
\prod_{K\in \scI}(1-\rho^{\,m_K}\,\produ{i\in K} t_i^{\,\frac{m_K}{a_i}})
\prod_{i\in I}
(1-t_i)\,
\cF(\rho,\ba,\bnu,\bt)
\quad \in \bZ[\rho][[\bt]]
.
\end{equation}
\begin{enumerate}
\item
$\wt{\cF}(\rho,\ba,\bnu,\bt)$ est polyn\^omial en $\rho$ et $\bt$.
\item
On suppose qu'il existe $j_{_{0}}\in J$ tel que $\nu_{\jz}=0$ et $a_{i}=1$
pour tout $i\in I_{\jz}$.
On a alors
\begin{equation}
\deg_{\rho,\bt}\wt{\cF}(\rho,\ba,\bnu,\bt^{-1})\leq 0.
\end{equation}
\item
Pour tout $\eps>0$, il existe $\eta(\eps)>0$
(dépendant de $\ba$ et $\bnu$)
tel qu'on ait pour tout $0\leq \eta\leq \eta(\eps)$
\begin{equation}
\deg_{\rho,\bt}\wt{\cF}(\rho^{\eta},\ba,\bnu,\bt^{-1})\leq \eps.
\end{equation}
\end{enumerate}
\item
Soit $\jz\in J$, $\ba\in \bN^J_{>0}$, $\bnu\in \bN^J$ 
et 
\begin{equation}
\cG(\jz,\rho,\tau,\ba,\bnu,\bt)
\eqdef
\sum_{\bd\in \bN^{J}}
\rho^{\,
\Minu{j\in J}(a_j\,d_j+\nu_j)
}
\tau^{\,
\Minu{j\in J\setminus \{\jz\}}(a_j\,d_j+\nu_j)
}
\bt^{\,\bd}
\quad
\in \bZ[\rho,\tau][[\bt]].
\end{equation}
Soit $m$ le plus petit commun multiple des $(a_j)_{j\in J}$
et $n$ le plus petit commun multiple des $(a_j)_{j\in J\setminus \{\jz\}}$.
Soit
\begin{equation}
\wt{\cG}(\jz,\rho,\tau,\ba,\bnu,\bt)
\eqdef
(1-\rho^{\,m}\,\tau ^m \produ{j\in J} t_j^{\,\frac{m}{a_j}})
(1-\tau ^n \produ{j\in J\setminus \{\jz\}} t_i^{\,\frac{n}{a_j}})
\prod_{j\in J} 
(1-t_j)\,
\cG(\jz,\rho,\tau,\ba,\bnu,\bt).
\end{equation}
\begin{enumerate}
\item
$\wt{\cG}(\jz,\rho,\ba,\bnu,\bt)$ est polyn\^omial en $\rho$ et $\bt$.
\item
On suppose qu'il existe $\ju \in J$ tel que $\nu_{\ju }=0$ et $a_{\ju }=1$.
Pour tout $\eps>0$, 
il existe $\eta(\eps)>0$
(dépendant de $\ba$ et $\bnu$)
tel qu'on ait pour tout $0\leq \eta\leq \eta(\eps)$
\begin{equation}
\deg_{\rho,\bt}\wt{\cG}(\jz,\rho,\rho^{\eta},\ba,\bnu,\bt^{-1})\leq \eps.
\end{equation}
\end{enumerate}
\end{enumerate}
\end{prop}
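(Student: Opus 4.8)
The plan is to prove both items by the same device: decompose the defining sum according to which index (resp.\ pair of indices) realises the minimum (resp.\ minima), so that on each piece the power of $\rho$ (resp.\ of $\rho$ and $\tau$) becomes a genuine monomial and the piece is the set of lattice points of a pointed rational polyhedron. For the polynomiality in item (1), write $L_j(\bd)=\nu_j+\sum_{i\in I_j}a_id_i$ and let $R_j\subseteq\bN^I$ be the set of $\bd$ with $L_j(\bd)=\Min_{j'\in J}L_{j'}(\bd)$, ties broken by smallest index, so that the $R_j$ partition $\bN^I$; each $R_j$ is the set of lattice points of a polyhedron in $\bR^I_{\geq0}$ with recession cone $C_j\eqdef\{\bd\in\bR^I_{\geq0}:\sum_{i\in I_j}a_id_i\leq\sum_{i\in I_{j'}}a_id_i,\ j'\in J\}$, and on $R_j$ one has $\rho^{\Min L}\bt^{\bd}=\rho^{\nu_j}\prod_{i\in I_j}(\rho^{a_i}t_i)^{d_i}\prod_{i\in I\setminus I_j}t_i^{d_i}$. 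The combinatorial heart is the claim that the extreme rays of $C_j$ are exactly the $e_i$ for $i\in I\setminus I_j$ together with the $r_K\eqdef\sum_{i\in K}(m_K/a_i)e_i$ for $K\in\scI$: on an extreme ray $\bv$ either $\sum_{i\in I_j}a_iv_i=0$, forcing $v_i=0$ for $i\in I_j$ and reducing to the face whose rays are the $e_i$, $i\notin I_j$; or $\sum_{i\in I_j}a_iv_i>0$, and then counting the facets that can be simultaneously active forces $\bv$ to be supported on exactly one index of $I_{j'}$ for each $j'$ in some subset $E\ni j$, with the corresponding linear forms all equal on $\bv$, and membership in $C_j$ forces $E=J$ (for $j'\notin E$ the form $\sum_{I_{j'}}a_id_i$ vanishes on $\bv$, so the inequality would force the common value $\leq0$), i.e.\ $\bv=r_K$ with $K\in\scI$. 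By the standard description of the generating series of the lattice points of a pointed rational polyhedron (triangulate it into translates of simplicial subcones of its recession cone), $\sum_{\bd\in R_j}\bx^{\bd}$ is a polynomial over $\bZ$ divided by $\prod_{i\in I\setminus I_j}(1-x_i)\prod_{K\in\scI}(1-\bx^{r_K})$; substituting $x_i=\rho^{a_i}t_i$ for $i\in I_j$, $x_i=t_i$ otherwise, and using that $K\cap I_j$ is a singleton (so $\bx^{r_K}$ becomes $\rho^{m_K}\prod_{i\in K}t_i^{m_K/a_i}$), multiplying by $\prod_{i\in I}(1-t_i)\prod_{K\in\scI}(1-\rho^{m_K}\prod_{i\in K}t_i^{m_K/a_i})$ clears the denominator of each term, and summing over $j\in J$ gives $\wt\cF\in\bZ[\rho,\bt]$.

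For the two degree bounds, assign to $\rho$ the weight $1$ and to each $t_i$ the weight $-1$, and put $w(\rho^a\bt^{\bc})=a-\sum_ic_i$, so that the $\deg_{\rho,\bt}$ of $\rho^a\bt^{\bc}$ after $\bt\mapsto\bt^{-1}$ is $w$; it then suffices for the first bound to see $w\leq0$ on every monomial of $\wt\cF$. Under the stated hypothesis $L_{\jz}(\bd)=\sum_{i\in I_{\jz}}d_i$, so every monomial of $\cF$ has $w=\Min_jL_j(\bd)-\sum_{i\in I}d_i\leq L_{\jz}(\bd)-\sum_{i\in I}d_i\leq0$; and every monomial of the correcting factor has $w\leq0$ too, the $(1-t_i)$ being clear and, for $K\in\scI$, the singleton $K\cap I_{\jz}$ carrying $a_i=1$ so that $\sum_{i\in K}1/a_i\geq1$ and $w(\rho^{m_K}\prod_{i\in K}t_i^{m_K/a_i})=m_K(1-\sum_{i\in K}1/a_i)\leq0$; since $w$ is additive under products, every monomial of $\wt\cF$ has $w\leq0$. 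For the last bound of item (1) no hypothesis is needed: by the polynomiality the $\bt$-exponents of $\wt\cF$ are nonnegative, so with $A=\deg_\rho\wt\cF$, for any monomial $\rho^a\bt^{\bc}$ and any $\eta\geq0$ the corresponding contribution to $\deg_{\rho,\bt}\wt\cF(\rho^\eta,\ba,\bnu,\bt^{-1})$ is $\eta a-\sum_ic_i\leq\eta A$, whence $\eta(\eps)=\eps/\max(A,1)$ works.

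Item (2) follows the same pattern, the sum now split according to the pair $(j_1,j_2)$ realising $\Min_{j\in J}L_j$ and $\Min_{j\in J\setminus\{\jz\}}L_j$ (ties by smallest index; one checks $j_1\neq\jz\Rightarrow j_1=j_2$, whereas $j_1=\jz$ leaves $j_2$ free). On the corresponding region the weight $\rho^{\Min_JL}\tau^{\Min_{J\setminus\jz}L}\bt^{\bd}$ is a monomial in which $t_{j_1}$ picks up a factor $\rho^{a_{j_1}}$ and $t_{j_2}$ a factor $\tau^{a_{j_2}}$ (both on $t_{j_1}$ when $j_1=j_2$), and the same facet-counting argument shows the recession cone of the region has extreme rays among the $e_j$ ($j\notin\{j_1,j_2\}$), the fully balanced $(m/a_j)_{j\in J}$, and — only when $j_1=\jz$ — the ray $(n/a_j)_{j\in J\setminus\jz}$, the point of the restriction being that for $j_1\neq\jz$ the inequality $a_{j_1}d_{j_1}\leq a_{\jz}d_{\jz}$ excludes that last ray. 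Under the substitutions these produce precisely the three displayed factors of $\wt\cG$, so $\wt\cG\in\bZ[\rho,\tau,\bt]$ and, $\rho$ occurring only through the first factor, it is polynomial in $\rho$. For the remaining estimate one repeats the weight bookkeeping with $w(\rho^a\tau^b\bt^{\bc})=a-\sum_jc_j$ (ignoring $\tau$): the hypothesis $\nu_{\ju}=0$, $a_{\ju}=1$ gives $\Min_JL\leq d_{\ju}$, so every monomial of $\cG$ has $w\leq0$; the factor $(1-\tau^n\prod_{j\neq\jz}t_j^{n/a_j})$ has $w\leq0$ trivially and $(1-\rho^m\tau^m\prod_jt_j^{m/a_j})$ has $w=m(1-\sum_j1/a_j)\leq0$ because $a_{\ju}=1$; hence every monomial $\rho^a\tau^b\bt^{\bc}$ of $\wt\cG$ satisfies $a\leq\sum_jc_j$, and evaluating $\deg_{\rho,\bt}$ at $\tau=\rho^\eta$, $\bt\mapsto\bt^{-1}$ yields $\eta b+(a-\sum_jc_j)\leq\eta\deg_\tau\wt\cG$, so $\eta(\eps)=\eps/\max(\deg_\tau\wt\cG,1)$ works.

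The one genuinely non-formal step is the explicit identification of the extreme rays of the cones $C_j$ and of their analogues in item (2); once that is in hand, polynomiality is an immediate term-by-term cancellation and the degree estimates reduce to tracking the single weight $w$. The points requiring care are that the tie-breaking must make the regions partition the relevant positive orthant without altering their recession cones, and that the numerator polynomials coming out of the polyhedral decomposition have nonnegative exponents, so that $\wt\cF$ and $\wt\cG$ indeed lie in the polynomial rings $\bZ[\rho,\bt]$ and $\bZ[\rho,\tau,\bt]$ rather than merely in the corresponding Laurent rings.
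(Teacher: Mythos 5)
Your argument is correct, but for the polynomiality statements 1(a) and 2(a) it takes a genuinely different route from the paper's. The paper expands the correcting products, obtains a closed alternating-sum formula for the coefficient of $\bt^{\bd}$ in $\wt{\cF}$ (the shift $\sum_K\gamma_K m_K$ commutes with the $\Min$ precisely because each $K\cap I_j$ is a singleton), and then kills that coefficient for $\bd$ outside an explicit box by pairing terms, i.e.\ by summing over a single $\gamma_K$ or a single $\mu_{\iz}$ whose constraint has become vacuous. You instead partition $\bN^I$ according to the index attaining the minimum, recognise each piece as the lattice points of a pointed rational polyhedron, compute the extreme rays of its recession cone, and invoke the standard rational-generating-function theorem for such polyhedra; your extreme-ray identification and the facet-counting argument behind it ($\abs{S}\leq \abs{T}+1$ against $S$ meeting $I_j$ and each $I_{j'}$ with $j'\in T$) are sound, and the correcting factors of $\wt{\cF}$ and $\wt{\cG}$ come out as exactly the images of the extreme rays under your substitution, with the ray $(n/a_j)_{j\neq\jz}$ appearing only in the regions where the global minimum sits at $\jz$ --- a genuinely informative explanation of where the factor $(1-\tau^n\prod_{j\neq\jz}t_j^{n/a_j})$ comes from, and a more detailed treatment of part 2 than the paper, which only says the technique is the same. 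The trade-off: the paper's cancellation argument is self-contained and yields an explicit bound (\eqref{eq:min:di0:ji0}) on the support of $\wt{\cF}$, which is what feeds the \og tout ceci pouvant être rendu explicite\fg\ claims later on, whereas your proof relies on one external (but standard) input --- that the lattice-point series of a rational polyhedron $P\subseteq\bR^n_{\geq 0}$ with recession cone $C$ becomes polynomial after multiplication by $\prod(1-\bx^{v})$ over integer generators $v$ of $C$ --- and the polyhedral decomposition would have to be made quantitative to recover those explicit bounds. The degree estimates (b) and (c) are essentially the paper's: your weight $w$ repackages the inequality $\nu_{\jz}+\sum_{i\in I_{\jz}}a_i d_i\leq\abs{\bd}$, and using polynomiality to take $\eta(\eps)=\eps/\max(\deg_{\rho}\wt{\cF},1)$ in (c) is a harmless variant of the paper's direct estimate.
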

\begin{proof}
Nous ne démontrons que la partie concernant $\cF$. La partie
concernant $\cG$ se démontre en utilisant la même technique, laquelle
est d'ailleurs une généralisation de celle utilisée dans la démonstration de \cite[proposition 56]{Bou:fam}.

Un calcul \'el\'ementaire montre
que le coefficient $\alpha(\rho,\ba,\bnu,\bd)$ 
de $\bt^{\bd}$ dans
$\wt{\cF}(\rho,\ba,\bnu,\bt)$ 
vaut
\begin{equation}\label{eq:coef:anubd:gen}
\sumsub{
\bgamma\in \{0,1\}^{\scI},\,\,\bmu\in \{0,1\}^I\\~\\
\mu_i+\frac{1}{a_i}\sumsub{K\in \scI\\i\in K}\,\,m_K.\gamma_K\leq d_i,\quad i\in I
}
(-1)^{\sumu{K\in \scI}\gamma_K+\sumu{i\in I} \mu_i}
\rho^{\,\Minu{j\in J}(\nu_j+\sum_{i\in I_j}a_i(d_i-\mu_i))}.
\end{equation}
Soit $\bd\in \bN^I$.
Pour tout $i\in I$, on note $\scP(i)$ l'inégalité
\begin{equation}
d_i\geq 1+\frac{1}{a_i}\sumsub{K'\in \scI\\i\in K'}m_K.
\end{equation}
Supposons tout d'abord qu'il existe $K\in \scI$ tel que tout $i\in K$
vérifie $\scP(i)$. Alors les inégalités définissant le domaine de sommation dans
\eqref{eq:coef:anubd:gen} et faisant intervenir $\gamma_K$ sont
toujours vérifiées~;
on en déduit aussitôt, en sommant par rapport à $\gamma_K$, que $\alpha(\rho,\ba,\bnu,\bd)$ est nul.

On suppose à présent qu'il existe $\jz\in J$ et $\iz \in I_{\jz}$ tels qu'on ait
\begin{equation}\label{eq:min:di0:ji0}
d_{\iz }\geq 1+\frac{1}{a_{\iz }}\left(\sum_{K\in \scI}m_K+\Maxu{j\in J}\abs{\nu_j-\nu_{\jz}}\right).
\end{equation}
Montrons que $\alpha(\rho,\ba,\bnu,\bd)$ est nul, ce qui établira le
caractère polynomial de $\wt{\cF}(\rho,\ba,\bnu,\bt)$.
D'après ce qui précède, on peut supposer que pour tout $K\in \scI$ il
existe $i\in K$ ne vérifiant pas $\scP(i)$.
On en déduit l'existence de $\ju \in J$ tel qu'aucun $i$ de $I_{\ju }$
ne vérifie $\scP(i)$ (si un tel $\ju $ n'existait pas, on pourrait construire facilement un $K\in
\scI$ contredisant la propriété précédente).
On a alors
\begin{equation}
\sum_{i\in I_{\ju }}a_i\,d_i\leq \sum_{K\in \scI} m_K
\end{equation}
et donc d'après \eqref{eq:min:di0:ji0}
\begin{equation}
a_{\iz }\,(d_{\iz }-1)+\nu_{\jz}\geq \sum_{i\in I_{\ju }}a_i\,d_i +\nu_j.
\end{equation}
On en déduit aussitôt qu'on a pour tous les $(\bgamma,\bmu)\in
\{0,1\}^{\scI}\times \{0,1\}^{I}$ vérifiant les inégalités du domaine
de sommation de \eqref{eq:coef:anubd:gen} la relation
\begin{equation}
\sum_{i\in I_{\jz}} a_{i}\,(d_i-\mu_i)+\nu_{\jz}
\geq 
\sum_{i\in I_{\ju }}a_i\,d_i +\nu_{\ju }.
\end{equation}
Ainsi, dans \eqref{eq:coef:anubd:gen}, on peut remplacer $\Minu{j\in J}$
par $\Minu{j\in J,\,j \neq \jz}$ ; par ailleurs, d'après \eqref{eq:min:di0:ji0},
la condition
\begin{equation}
\mu_{\iz }+\frac{1}{a_{\iz }}\sumsub{K\in \scI\\\iz \in K}\,\,m_K.\gamma_K\leq d_{\iz }
\end{equation}
est toujours vérifiée. On en
déduit aussitôt, en sommant par rapport à $\mu_{\iz }$, que  $\alpha(\rho,\ba,\bnu,\bd)$ est nul.

Vu \eqref{eq:coef:anubd:gen}, le deuxième point
découle de l'inégalité évidente
\begin{equation}
\forall \bd\in \bN^I,\quad \nu_{\jz}+\sum_{i\in I_{\jz}}a_i\,d_i-\abs{\bd}\leq 0.
\end{equation}
De même, pour le troisième point, il suffit de
vérifier que si $\eta$ est assez petit on a pour tout $\bd\in \bN^I$
\begin{equation}
\eta \,\,\Minu{j\in J}(\nu_j+\sum_{i\in I_j}a_i\,d_i)-\abs{\bd}\leq \eps.
\end{equation}

\end{proof}

\begin{defi}
Soit $M\in \bZ$.
Un élément $F$ de $\bZ[\rho,\tau][[\bt]]$
est dit $M$-contrôlé s'il s'écrit 
comme une somme finie d'éléments 
de la forme
\begin{equation}
\frac{P(\rho,\tau,\bt)}
{\produ{\alpha\in A}(1-\rho^{\,m_{\alpha}}\,\tau^{\,n_{\alpha}} \produ{i\in I}t_i^{d_{i_{\alpha}}})}
\end{equation}
où $P\in \bZ[\rho,\tau,\bt]$, $A$ est un ensemble fini 
(éventuellement vide)
et pour $\alpha\in A$,  $m_{\alpha}$, $n_{\alpha}$ 
et les $d_{i_{_{\alpha}}}$ sont des entiers positifs,
tels qu'on ait les propriétés suivantes~: 
pour tout $\eps>0$, 
il existe $\eta(\eps)>0$ tel qu'on ait pour 
tout $0\leq \eta(\eps)\leq \eta$
\begin{equation}
\deg_{\rho,\bt}
P(\rho,\rho^{\eta},\bt^{-1})
\leq 
M-2+\eps~;
\end{equation}
et pour tout $\alpha\in A$
\begin{equation}
m_{\alpha}-\sum_{i\in I} d_{i_{_{\alpha}}}
\leq 
-1.
\end{equation}
Si $P\in \bN[\rho,\tau,\bt]$ la première condition est équivalente 
à $\deg_{\rho,\bt} P(\rho,1,\bt^{-1})\leq M-2$.
\end{defi}
Soit $H_{\bullet}=\{H_{\bg}\}_{\bg\in \{0,1\}^I}$ une famille d'éléments de
$\bZ[\rho,\tau][[\bt]]$ telle que $H_{0}(\bt=0)=1$. 
Soit $\eta\geq 0$.
Si $k$ est fini, pour $\becG\in \diveffc^J_{\leq 1}$, on pose
\begin{equation}
\sum_{\bd\in \bN^I} 
a_{\bd,\eta}
(H_{\bullet},\becG)\bt^{\bd}
\eqdef
\prod_{v\in \courbez}
\frac{H_{v(\becG)}(q_v,q_v^{\eta},\bt^{f_v})}
{\prod_{i\in I}(1-t_i^{f_v})}
\end{equation}
et, si $k=\bQ$, pour $\becG\in \diveffcpr^J_{\leq 1}$, on pose
\begin{equation}
\sum_{\bd\in \bN^I} 
a_{\bd,\eta,p,r}(H_{\bullet},\becG)\bt^{\bd}
\eqdef
\prod_{v\in \cprz}
\frac{H_{v(\becG)}(p_v^{\,r},p_v^{\,r\,\eta},\bt^{f_v})}
{\prod_{i\in I}(1-t_i^{f_v})}.
\end{equation}

\begin{lemme}\label{lemme:maj:seriegen}
Avec les notations ci-dessus, on suppose que
$H_{\bg}$ est $\abs{\bg}$-contrôlé si $\bg\in \{0,1\}^J\setminus \{0\}$
et que $H_0-1$  est $0$-contrôlé.

Il existe alors une constante $C>0$ telle que pour tout $\eps>0$ assez
petit, il existe $\eta(\eps)>0$ (tout ceci pouvant être rendu
explicite en fonction de $H_{\bullet}$) tels qu'on ait
pour tout $\bd\in \bN^I$ 
\begin{enumerate}
\item
($k$ fini)
pour tout $\becG\in \diveffc^J_{\leq 1}$
\begin{enumerate}
\item
\begin{equation}
\abs{
a_{\bd,0}(H_{\bullet},\becG)
-
\left(\frac{\hc\,q^{1-\gc}}{q-1}\right)^{\# I}
\prod_{v} 
H_{v(\becG)}(q_v,1,q_v^{-1})
\,q^{\,\abs{\bd}}
}
\ll_{\eps,H_{\bullet}}
\left(
\prod_{v,v(\becG)\neq 0}
C.q_v^{\abs{v(\becG)}-2+\eps}
\right)
\left[
\sum_{i\in I}
q^{\,\abs{\bd}-\eps\,d_i}
\right]
\end{equation}
\item
pour tout $0\leq \eta\leq \eta(\eps)$
\begin{equation}
a_{\bd,\eta}(H_{\bullet},\becG)
\ll_{\eps,H_{\bullet}}
\left(
\prod_{v,v(\becG)\neq 0}
q_v^{\abs{v(\becG)}-2+\eps}
\right)
q^{\,\abs{\bd}}.
\end{equation}
\end{enumerate}
\item
($k=\bQ$)
Il existe $c(p,r)>0$ et $C(p,r,\eps)>0$
vérifiant $\lim_{r\to +\infty}c(p,r)=1$
et 
$\lim_{r\to +\infty}C(p,r,\eps)=1$
tels qu'on a
\begin{enumerate}
\item
\begin{equation}
\abs{
a_{\bd,p,r,0}(H_{\bullet},0)
-
c(p,r)
\,
p^{\,r\,\abs{\bd}}
}
\ll_{H_{\bullet}}
C(p,r,\eps)
\left[
\sum_{i\in I}
p^{\,r\,\abs{\bd}-\eps\,d_i}
\right]
\end{equation}
\item
pour tout $\becG\in \diveffcpr^J_{\leq 1}$
et tout $0\leq \eta\leq \eta(\eps)$
\begin{equation}
\abs{a_{\bd,\eta,p,r}(H_{\bullet},\becG)}
\ll_{H_{\bullet}}
C(p,r,\eps)
\left(
\prod_{v,v(\becG)\neq 0}
C.
p_v^{r\,(\abs{v(\becG)}-2+\eps)}
\right)
p^{\,r\,\abs{\bd}}
\end{equation}
\end{enumerate}
\end{enumerate}
\end{lemme}
\begin{proof}
Elle est contenue
dans \cite[p.18]{Bou:fam} si $k$ est fini et dans
\cite[Section 6]{Bou:moduli} si $k=\bQ$. Indiquons seulement les
grandes lignes~: si $k$ est fini, on applique
\cite[Lemma 5.1]{Bou:moduli} (qui est essentiellement un avatar des
estimations de Cauchy) à la série
\begin{equation}
\cH(\becG,\bt)\eqdef
\prod (1-q\,t_i)
\sum_{\bd\in \bN^I}a_{\bd,\eta}(G,\becG)\bt^{\bd}
=\cH(0,\bt)
\prod_{v,v(\becG)\neq 0}
\frac{H_{v(\becG)}(q_v,q_v^{\eta},\bt^{f_v})}{H_0(q_v,q_v^{\eta},\bt^{f_v})}.
\end{equation}
Les hypothèses faites sur $H_{\bullet}$ permettent de majorer 
de manière ad hoc la norme sup
\begin{equation}
\norm{\produ{v,v(\becG)\neq 0}
\frac{H_{v(\becG)}(q_v,q_v^{\eta},\bt^{f_v})}{H_0(q_v,q_v^{\eta},\bt^{f_v})}
}_{q^{-1+\eps}}.
\end{equation}

Si $k=\bQ$ le raisonnement est analogue. 
Il faut montrer en outre dans ce cas qu'on a
\begin{equation}
\lim_{r\to +\infty}\wt{\cH}(0,(p^{-r},\dots,p^{-r}))=1
\end{equation}
et, pour $\eps$ assez petit,
\begin{equation}
\limsup_{r\to +\infty}\norm{\wt{\cH}(0,\bt)}_{p^{r(-1+\eps)}}=1
\end{equation}
où $\wt{\cH}(0,\bt)\eqdef \cH(0,\bt)\prod (1-t_i)$
ce qui se fait en utilisant la décomposition en produit eulérien et en
appliquant \cite[Lemma 5.2]{Bou:moduli}.
\end{proof}

Désormais, on considère une hypersurface intrinsèque linéaire $X$ et
un ensemble admissible de variables $J$ (\cf la sous-section \ref{subsec:hypintlin}). 
Rappelons qu'on a alors $\dim(X)+1=\# J$ et $\#I=\rk(\Pic(X))$. 
Dans tout ce qui suit, si $A$, $B$ et $x$ sont des quantités, la notation
$A\ll_{x}B$ signifiera \og $A$ est majoré par $B$ à une constante
multiplicative près ne dépendant que de $X$, $\courbe$ et $x$\fg. Les
constantes introduites ne dépendront que de $X$ et $\courbe$. 

Nous aurons besoin de l'hypothèse suivante sur $X$.
\begin{hyp}\label{hyp:noninter}
\begin{enumerate}
\item\label{item:1:hyp:noninter}
Au plus $\dim(X)+1$ diviseurs parmi les $\{\cD_i\}_{i\in \classadminc}$ s'intersectent mutuellement.
En d'autres termes, on a $\Max\{\# K, \,\,K\in \classadminc\}\leq \dim X+1$.
\item\label{item:2:hyp:noninter}
si $K\in \classadminc$ est tel que $K\cap I_j$ est un
singleton pour tout $i$, alors il existe $i\in K$ tel que 
$a_i=1$.
\end{enumerate}
\end{hyp}
\begin{ex}
On vérifie aussitôt que cette hypothèse est satisfaite pour les
surfaces de la liste de \cite{Der:sdp:ut:hyp}, ainsi que pour la
famille construite dans \cite{Bou:fam}. Nous ne savons pas s'il existe
des exemples d'hypersurfaces linéaires intrinsèques pour lesquelles
elle n'est pas vérifiée.
\end{ex}
Supposons $k$ fini.
Pour $\becG\in \diveffc^J$ et $\bd\in \bN^I$, 
on pose
\begin{equation}\label{eq:def:scn}
\scM(\bd,\becG)
\eqdef
\sumsub{
\becD\in \diveffc^I
\\
\deg(\becD)=\bd
}
\nuXJ(\becG,\becD)
\,
q^{
\deg(
\pgcd_{j\in J}(
\scan{\cG_j}
\prod_{i\in I_j}\scan{\ecD_i}^{a_{i}})
)
}
\end{equation}
et, pour $\eta\geq 0$ et $\jz\in J$, 
\begin{equation}\label{eq:defjzeta}
\scM_{_{\jz,\eta }}(\bd,\becG)
\eqdef
\sumsub{
\becD\in \diveffc^I
\\
\deg(\becD)=\bd
}
\abs{\nuXJ(\becG,\becD)}
q^{\deg(\pgcd_{j\in J}(\scan{\cG_j}\prod_{i\in I_j}\scan{\ecD_i}^{a_{i}}))
+\eta\,
\deg(\pgcd_{j\in J\setminus \{\jz\}}(\scan{\cG_j}\prod_{i\in I_j}\scan{\ecD_i}^{a_{i}}))}.
\end{equation}
Notons que d'après la remarque \ref{rem:nuXJnul}, ces quantités sont
nulles si $\becG\notin \diveffc^J_{\leq 1}$.

Si $k=\bQ$, on définit de même les fonctions $\scM_{_{p,r}}$ et $\scM_{_{\jz,\eta,p,r }}$
sur $\bN^I\times \diveffcpr^J$ en remplaçant dans les formules ci-dessus
$\courbe$ par $\cpr$, $\nuXJ$ par $\nuXJpr$ et $q$ par $p^r$.

Posons $\scF(n)=\Min(1,n)$.
Pour $\bg\in \bN^J$, soit
\begin{equation}
F_{\bg}(\rho,\bt)
\eqdef
\sum_{\bbf\in \bN^I}
\nuzXJ(\bg,\bbf)
\rho^{\,
\Minu{j\in J}
(g_j+\sumu{i\in I_j} a_{i}f_i)}
\bt^{\bbf},
\end{equation}
\begin{equation}
F_{1,\bg}(\rho,\bt)
\eqdef
\sum_{\bbf\in \bN^I}
\nuzXJ(\bg,\bbf)
\rho^{\,\scF\left(
\Minu{j\in J}(g_j+\sumu{i\in I_j} a_{i}f_i)
\right)
}
\bt^{\bbf},
\end{equation}
\begin{equation}
F_{2,\bg}(\rho,\bt)
\eqdef
F_{\bg}(\rho,\bt)-F_{1,\bg}(\rho,\bt)
\end{equation}
et, pour $\alpha\in \{1,2\}$,
\begin{equation}
H_{\alpha,\bg}(\rho,\bt)
\eqdef
\left(\prod_{i\in I}1-t_i\right)
F_{\alpha,\bg}(\rho,\bt).
\end{equation}
Pour  $\jz\in J$ on pose
\begin{equation}
F_{\jz,\bg}(\rho,\tau,\bt)
\eqdef
\sum_{\bbf\in \bN^I}
\abs{\nuzXJ(\bg,\bbf)}
\rho^{
\Minu{j\in J}(g_j+\sum_{i\in I_j} a_{i}f_i)
}
\tau^{
\Minu{j\in J\setminus \{\jz\}}(g_j+\sum_{i\in I_j} a_{i}f_i)
}
\bt^{\bbf},
\end{equation}
\begin{equation}
F_{\jz,1,\bg}(\rho,\tau,\bt)
\eqdef
\sum_{\bbf\in \bN^I}
\abs{\nuzXJ(\bg,\bbf)}
\rho^{
\scF\left(\Minu{j\in J}(g_j+\sum_{i\in I_j} a_{i}f_i)\right)
}
\tau^{
\scF\left(\Minu{j\in J\setminus \{\jz\}}(g_j+\sum_{i\in I_j}
  a_{i}f_i)\right)
}
\bt^{\bbf}
\end{equation}
et
\begin{equation}
F_{\jz,2,\bg}
\eqdef
F_{\jz,\bg}
-
F_{\jz,1,\bg}.
\end{equation}

\begin{prop}\label{prop:res:sergen}
On suppose l'hypothèse \ref{hyp:noninter} vérifiée.
\begin{enumerate}
\item
Pour tout $\bg\in \{0,1\}^J$, $H_{1,\bg}$ est un élément de $\bZ[\rho,\bt]$ .
$H_{1,0}-1$ est $0$-contrôlée et $H_{1,\bg}$ est $\abs{\bg}$-contrôlée
pour $\bg\neq 0$
\item
Pour tout $\bg\in \{0,1\}^J$, $H_{2,\bg}$ est 
$\abs{\bg}$-contrôlée.
\item
Il existe un élément $H_{\jz,1,\bg}\in \bN[\rho,\tau,\bt]$
tel que $H_{\jz,1,0}-1$ est $0$-contrôlé, $H_{\jz,1,\bg}$ est $\abs{\bg}$-contrôlée
pour $\bg\neq 0$, et pour tout $\rho,\tau>0$  
\begin{equation}
F_{\jz,1,\bg}(\rho,\tau,\bt)\left(\prod_{i\in I} 1-t_i\right)
\leq H_{\jz,1,\bg}(\rho,\tau,\bt).
\end{equation}
\item
Pour tout $\bg\in \{0,1\}^J$, $F_{\jz,2,\bg}$ est $\abs{\bg}$-contrôlé.
\item\label{item:prop:res:sergen:kfini}
($k$ fini)
Pour tout $\becG\in \diveffc^J_{\leq 1}$, 
posons
\begin{equation}
\cprinc(\becG)
\eqdef 
\left(
\frac{\hc\,q^{1-\gc}}{q-1}
\right)^{\# I}
\prod_{v\in \courbez} 
H_{v(\becG)}(q_v,q_v^{-1}).
\end{equation}
Il existe alors $C>0$ telle que pour tout $\eps>0$ assez petit,
il existe $\eta(\eps)>0$ (tout ceci pouvant être explicitement décrit en fonction de $X$)
tels qu'on ait pour tout $\bd\in \bN^I$,
et tout $\becG\in \diveffc^J_{\leq 1}$ 
\begin{equation}\label{eq:maj:NdG}
\abs{
q^{\,-\abs{\bd}}
\scM(\bd,\becG)
-
\cprinc(\becG)
}
\ll_{\eps}
\left(
\prod_{v,v(\becG)\neq 0}
C.
q_v^{\abs{v(\becG)}-2+\eps}
\right)
\left[
\sum_{i\in I}
q^{\,-\eps\,d_i}
\right]
\end{equation}
et pour tout $0\leq \eta\leq \eta(\eps)$
\begin{equation}\label{eq:maj:ceps}
q^{-\abs{\bd}}\abs{\scM_{\jz,\eta }(\bd,\becG)}
\ll_{\eps}
\left(
\prod_{v,v(\becG)\neq 0}
C.
q_v^{\abs{v(\becG)}-2+\eps}
\right)
\end{equation}
Par ailleurs, on a
\begin{equation}\label{eq:sum:becG:gamma}
q^{\,(1-\gc)\dim(X)}
\sum_{\becG\in 
\diveffc_{\leq 1}^J}
\cprinc(\becG)
\,
q^{-\abs{\deg(\becG)}}=\gamma(X)
\end{equation}
et pour tout $\eps>0$ assez petit
\begin{equation}\label{eq:sum:becG:gamma:eps}
\abs{
\gamma(X)
-
q^{\,(1-\gc)\dim(X)}
\sumsub{\becG\in \diveffc_{\leq 1}^J
\\
\deg(\ecG_j)\leq \acc{y}{\scG_j}, \quad j\in J
}
\cprinc(\becG)
\,
q^{-\abs{\deg(\becG)}}
}
\ll_{\eps} 
q^{-\eps\sum_{j\in J}\acc{y}{\scG_j}}.
\end{equation}

\item\label{item:prop:res:sergen:kQ}
($k=\bQ$)

Il existe $C>0$ et $c(p,r)>0$  telles que pour tout $\eps>0$ assez
petit il existe $C(p,r,\eps)>0$ et $\eta(\eps)> 0$ (tout ceci
pouvant être rendu explicite en fonction de $X$), 
avec 
$\lim_{r\to +\infty}c(p,r)=1$
et 
$\lim_{r\to +\infty}C(p,r,\eps)=1$
tels qu'on ait pour tout $\bd\in \bN^I$,
pour tout $\becG\in \diveffcpr^J_{\leq 1}$ et tout $0\leq \eta\leq \eta(\eps)$
\begin{equation}\label{eq:maj:Nd0:pr}
\abs{
p^{-r\,\abs{\bd}}
\scM_{_{p,r}}(\bd,0)
-
c(p,r)\,p^{\,r\,\abs{\bd}}
}
\leq
C(p,r,\eps)
\left[
\sum_{i\in I}
p^{\,-\eps\,d_i)}
\right]
\end{equation}
\begin{equation}\label{eq:maj:ceps:pr}
\text{et}\quad
p^{-r\,\abs{\bd}}
\scM_{\jz,\eta,p,r }(\bd,\becG)
\leq
C(p,r,\eps)
\left(
\prod_{v,v(\becG)\neq 0}
C.
p_v^{\,r(\abs{v(\becG)}-2+\eps)}
\right).
\end{equation}
\end{enumerate}
\end{prop}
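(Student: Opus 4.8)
The proposition divides into the formal claims (1)--(4) about the generating series and the counting claims (5)--(6), which are deduced from (1)--(4) by feeding them into Lemma~\ref{lemme:maj:seriegen}, together with a direct Euler-product evaluation of~\eqref{eq:sum:becG:gamma}.

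\textbf{Claims (1)--(4).} By Remarks~\ref{rem:nu0nonnul} and~\ref{rem:sum:bbfK:n0}, $\nuzXJ(\bg,\bbf)$ depends only on $\bg$ and on the support $K=\{i\in I:\ f_i\geq1\}$ of $\bbf$, and vanishes unless $K\in\classadminc$; in particular only finitely many supports occur, and each satisfies $\#K\leq\#J=\dim X+1$ by Hypothesis~\ref{hyp:noninter}(1). I would therefore split each of $F_{\bg}$, $F_{1,\bg}$, $F_{\jz,\bg}$, $F_{\jz,1,\bg}$ according to this support. On the stratum $\{\Supp(\bbf)=K\}$ the coefficient is constant, and after the shift $f_i\mapsto f_i-1$ ($i\in K$) the exponent $\Min_{j\in J}(g_j+\sum_{i\in I_j}a_if_i)$ takes the form $\min\bigl(c_K,\ \Min_{j\,:\,I_j\cap K\neq\vide}(\nu_j+\sum_{i\in I_j\cap K}a_id_i)\bigr)$ with $\nu_j=g_j+\sum_{i\in I_j\cap K}a_i$ and $c_K=\min\{g_j:\ I_j\cap K=\vide\}\in\{0,1,+\infty\}$. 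When $c_K=0$ the stratum gives, up to a monomial, an unweighted geometric series; otherwise the exponent is $\geq1$, so after multiplication by $\prod_i(1-t_i)$ the contribution of the stratum to $H_{1,\bg}$ (resp.\ to $\prod_i(1-t_i)F_{\jz,1,\bg}$) is $\rho$ (resp.\ $\rho\tau$) times a monomial, while its contribution to $H_{2,\bg}$ (resp.\ to $\prod_i(1-t_i)F_{\jz,2,\bg}$) is, after factoring out that $\rho$ (resp.\ $\rho\tau$) and shifting $\nu_j$ to $\nu_j-1$, of the form $\wt\cF$ (resp.\ $\wt\cG$) of Proposition~\ref{prop:calc:sergen} for the partition of $K$ into the sets $K\cap I_j$, which is a genuine partition into $\#J$ singletons precisely when $K$ meets every block, in which case $\#K=\#J$. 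Polynomiality then follows from Proposition~\ref{prop:calc:sergen}(1)(a),(2)(a) combined with the Möbius-type cancellations between strata (for instance the $K=\vide$ stratum, contributing $\prod_i(1-t_i)$, kills the low-degree monomials produced by the small strata), and $H_{\jz,1,\bg}$ is obtained from the signed polynomial $\prod_i(1-t_i)F_{\jz,1,\bg}$ by passing to a positive-coefficient dominant. For the $\deg_{\rho,\bt}$-bounds one uses that a stratum carrying the weight $\rho$ (or $\rho\tau$) contributes a monomial in at least $\#K\geq\#J-\abs{\bg}$ variables, which with $\#J=\dim X+1\geq3$ gives the required bound, while Hypothesis~\ref{hyp:noninter}(2) supplies an index $k_{j_0}\in K$ with $a_{k_{j_0}}=1$: this both verifies the hypothesis $\nu_{j_0}-1=g_{j_0}=0$ of Proposition~\ref{prop:calc:sergen}(1)(b),(2)(b) when $\bg=0$ (yielding the $0$-control of $H_{1,0}-1$, $H_{\jz,1,0}-1$, $H_{2,0}$, $F_{\jz,2,0}$) and forces the denominator inequalities $m_\alpha-\sum_i d_{i_\alpha}\leq-1$ via $\sum_j m_K/a_{k_j}\geq m_K+\#J-1$, the residual factor $\rho^{g_{j_0}}$ accounting for the passage from $0$- to $\abs{\bg}$-control in the remaining cases. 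This proves (1)--(4).

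\textbf{Claims (5)--(6).} Since $\nuXJ(\becG,\becD)=\prod_v\nuzXJ(v(\becG),v(\becD))$ and $q^{\deg(\pgcd_{j}(\dots))}=\prod_v q_v^{\,\Min_j(v(\ecG_j)+\sum_{i\in I_j}a_iv(\ecD_i))}$ (and likewise for the $\eta$-weighted gcd), grouping the defining sum of $\scM(\bd,\becG)$ by the local data $(v(\becD))_v$ identifies it with the coefficient of $\bt^{\bd}$ in $\prod_v F_{v(\becG)}(q_v,\bt^{f_v})=\prod_v H_{v(\becG)}(q_v,\bt^{f_v})/\prod_i(1-t_i^{f_v})$, i.e.\ with $a_{\bd,0}(H_\bullet,\becG)$ of Lemma~\ref{lemme:maj:seriegen} for the family with $H_\bg=H_{1,\bg}+H_{2,\bg}$; similarly $\scM_{\jz,\eta}(\bd,\becG)$ is the coefficient of $\bt^{\bd}$ in $\prod_v F_{\jz,v(\becG)}(q_v,q_v^\eta,\bt^{f_v})$, which by (3) and (4) is dominated termwise by $a_{\bd,\eta}(H'_\bullet,\becG)$ for a positive-coefficient family $H'_\bullet$ assembled from $H_{\jz,1,\bg}$ and a dominant of $\prod_i(1-t_i)F_{\jz,2,\bg}$, with $H'_0(\bt=0)=1$ and the required control. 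Then~\eqref{eq:maj:NdG} and~\eqref{eq:maj:ceps} are Lemma~\ref{lemme:maj:seriegen}(1)(a),(b), with $\cprinc(\becG)$ matching by construction, and~\eqref{eq:maj:Nd0:pr},~\eqref{eq:maj:ceps:pr} are its part~(2) (replacing $\courbe,q$ by $\cpr,p^r$ and using the $k=\bQ$ halves). For~\eqref{eq:sum:becG:gamma}, I would expand $\sum_{\becG\in\diveffc^J_{\leq 1}}\cprinc(\becG)\,q^{-\abs{\deg(\becG)}}$ as the Euler product $\bigl(\tfrac{\hc q^{1-\gc}}{q-1}\bigr)^{\#I}\prod_v\bigl(\sum_{\bg\in\{0,1\}^J}H_\bg(q_v,q_v^{-1})q_v^{-\abs{\bg}}\bigr)$ and, using $H_\bg(q_v,q_v^{-1})=(1-q_v^{-1})^{\#I}F_\bg(q_v,q_v^{-1})$, reduce to the purely local identity $\sum_{\bg\in\{0,1\}^J}F_\bg(q_v,q_v^{-1})q_v^{-\abs{\bg}}=\#X(\kappa_v)/q_v^{\dim X}$; this last I would obtain by unwinding the Möbius-inversion definition of $\nuzXJ$ and comparing it with the point counts of the strata $\tors_{X,\be}$ given by Lemma~\ref{lm:nbre:pts:tors} (the exponent $\Min_j$ accounting for the fibres of the linear form $\cF$, the factor $(q_v-1)^{\#I}$ for the split Néron--Severi torus), and then comparing with~\eqref{eq:def:gamma} after multiplication by $q^{(1-\gc)\dim X}$. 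Finally~\eqref{eq:sum:becG:gamma:eps} follows by estimating the tail $\sum_{\becG:\,\exists j,\,\deg(\ecG_j)>\acc{y}{\scG_j}}$ with the bound $\cprinc(\becG)\ll_\eps\prod_v q_v^{\abs{v(\becG)}-2+\eps}$ read off from (1)--(2).

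\textbf{Expected main obstacle.} The real work is the case analysis behind (1)--(4): carrying out the support decomposition, tracking how the $F_1/F_2$ and $F_{\jz,1}/F_{\jz,2}$ splitting shifts the parameter $\bnu$, and checking in every case that Hypothesis~\ref{hyp:noninter} furnishes exactly what Proposition~\ref{prop:calc:sergen} needs---the cardinality bound $\#K\leq\#J$ on one side, and on the other an index $k_{j_0}$ with $a_{k_{j_0}}=1$ ensuring both the hypothesis of its parts (1)(b)/(2)(b) and the denominator inequalities. The Euler-product identification of $\cprinc$ with the torsor point count in~\eqref{eq:sum:becG:gamma} is a second, more bookkeeping-heavy, point.
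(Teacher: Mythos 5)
Your proposal is correct and follows essentially the same route as the paper: the same key inputs (Proposition~\ref{prop:calc:sergen}, Lemma~\ref{lemme:maj:seriegen}, Lemma~\ref{lm:nbre:pts:tors}, the vanishing properties of $\muzX$ from Proposition~\ref{prop:mu}, and the two halves of Hypothèse~\ref{hyp:noninter}) are deployed at the same places, and your Euler-product reduction of \eqref{eq:sum:becG:gamma} to a local identity matched against $\#\tors_{X,\be}(\kappa_v)$ is exactly what the paper does. The one organizational difference is that for $F_{1,\bg}$ and $F_{\jz,1,\bg}$ you stratify by the support $K=\Supp(\bbf)$ and invoke cancellations between strata, whereas the paper substitutes $\nuzXJ(\bg,\bbf)=\sum_{\bm\leq\bbf}\muzX(\bg,\bm)$ first and sums the geometric series afterwards, arriving directly at \eqref{eq:expr:G} with coefficients $\muzX(\bg,\bm)$ whose vanishing (for $\abs{\bg}+\abs{\bm}=1$, etc.) immediately gives the degree bounds. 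The two are related by the elementary identity $\sum_{K\supset M}\prod_{i\in K\setminus M}t_i\prod_{i\notin K}(1-t_i)=1$, so your version works, but it puts the burden on you at one point: for claim (3) you build $H_{\jz,1,\bg}$ as a positive dominant of the already-computed signed polynomial $\prod_i(1-t_i)F_{\jz,1,\bg}$, and the required degree bound then needs the low-degree coefficients of $\sum_K\abs{\nuzXJ(\bg,\bbf_K)}\,\rho^{\epsilon_K}\tau^{\epsilon'_K}\prod_{i\in K}t_i\prod_{i\notin K}(1-t_i)$ to vanish --- which holds because for $\#K\leq 1-\abs{\bg}+1$ Proposition~\ref{prop:mu} forces $\abs{\nuzXJ(\bg,\bbf_K)}=\sum_{M\subset K}\abs{\muzX(\bg,\bbf_M)}$, but this is an extra check you should make explicit; the paper sidesteps it by majorizing $\abs{\nuzXJ(\bg,\bbf)}\leq\sum_{\bbf'\leq\bbf}\abs{\muzX(\bg,\bbf')}$ \emph{before} summing, which restores the clean Möbius structure. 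Two smaller remarks: the $\abs{\bg}$-dependence in the control of the $F_2$-type strata enters through the monomial prefactor $\prod_{j\in J_1}t_j^{2-g_j}$ (not through a factor $\rho^{g_{j_0}}$), while the hypothesis $\nu_{j_0}=0$, $a_{j_0}=1$ of Proposition~\ref{prop:calc:sergen}(1)(b) holds for every $\bg$ after the shift, not only for $\bg=0$; and for claim (4) be aware that the supports contributing to $F_{\jz,2,\bg}$ need not lie in $\scI$ (they may miss $I_{\jz}$, contain an index outside $\sqcup_jI_j$, or meet some $I_{\ju}$ in two points), which is why the paper needs the three series \eqref{eq:series:1}--\eqref{eq:series:3} and the sub-cases \eqref{eq:series:1:A}--\eqref{eq:series:1:C} of its appendix --- your framework accommodates this since Proposition~\ref{prop:calc:sergen}(1) allows non-singleton blocks, but the case analysis is the bulk of the work, as you correctly anticipate.
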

\begin{proof}
Compte tenu de \eqref{eq:def:nu0}, on a
\begin{equation}
F_{1,\bg}(\rho,\bt)
=
\sumsub{
\bm\in \{0,1\}^I
\\
\bbf\in \bN^I}
\muzX (\bg,\bm)
\bt^{\bm+\bbf}
+
\sumsub{
\bm\in \{0,1\}^I,\,\bbf\in \bN^I
\\
\forall j\in J,\quad g_j+\sum_{i\in I_j}(m_i+f_i)\geq 1
}
\muzX (\bg,\bm)
(\rho-1)
\bt^{\bm+\bbf}.
\end{equation}
Or pour  $\bm\in \{0,1\}^I$
on a
\begin{align}
\sumsub{
\bbf\in \bN^I
\\
\forall j\in J,\quad g_j+\sum_{i\in I_j}(m_i+f_i)\geq 1
}
\bt^{\bbf}
&=
\sumsub{
\bbf\in \bN^I
\\
\forall j\in J,\quad (g_j+\sum_{i\in I_j}m_i=0)\imply \sum_{i\in I_j}f_i\geq 1
}
\bt^{\bbf}
\\
&
=
\frac{1}
{\produ{i\in I}(1-t_i)}
\times
\prodsub{
j\in J
\\
g_j+\sum_{i\in I_j}m_i=0
}
(1-\prod_{i\in I_j}(1-t_i)).
\end{align}
On en déduit l'égalité
\begin{equation}\label{eq:expr:G}
H_{1,\bg}(\rho,\bt)
=
\sum_{\bm\in \{0,1\}^I}
\muzX (\bg,\bm)
\,
\bt^{\bm}
\,
\left[
1+(\rho-1)
\prodsub{
j\in J
\\
g_j+\sum_{i\in I_j}m_i=0
}
(1-\prod_{i\in I_j}(1-t_i))
\right].
\end{equation}
Mais pour tous $\bm$, $\bg$ on a 
\begin{equation}
\sum_{i\in I}m_i+\#\{j\in J,\,\,g_j+\sum_{i\in I_j}m_j=0\}
\geq \#\{j\in J, \,\,g_j=0\}=\#J-\abs{\bg}\geq 3-\abs{\bg}
\end{equation}
On déduit de l'inégalité précédente et de \eqref{eq:expr:G} 
les majorations
\begin{equation}
\deg_{\rho,\bt}H_{1,\bg}(\rho,\bt^{-1})
\leq 
\Max(
\abs{\bg}-2
,
\Maxu{
\substack{
\bm\in \{0,1\}^I
\\
\muzX (\bm,\bg)\neq 0
}
} 
(-\abs{\bm})
)
\end{equation}
et 
\begin{equation}
\deg_{\rho,\bt}[H_{1,0}(\rho,\bt^{-1})-1]
\leq 
\Max(-2
,
\Maxu{
\substack{
\bm\in \{0,1\}^I
\\
\abs{\bm}\neq 0
\\
\muzX (\bm,0)\neq 0
}
} 
(-\abs{\bm})
)
\end{equation}
Soit $\bm\in \{0,1\}^I$ et $\bg\in \{0,1\}^J$ tels que 
$\muzX (\bg,\bm)\neq 0$ et tels qu'en outre 
si $\abs{\bg}=0$ on a
${\abs{\bm}}\neq 0$. 
On a alors
\begin{equation}
-\abs{\bm}\leq \abs{\bg}-2.
\end{equation}
C'est en effet immédiat si $\abs{\bg}\geq 2$ et si $\abs{\bg}=1$ ou
$\abs{\bg}=0$ cela découle de la proposition \ref{prop:mu}.
Ceci conclut la démonstration du premier point.

Passons au deuxième point.
La remarque \ref{rem:nu0nonnul}
permet d'écrire 
\begin{equation}\label{eq:F2grhobt}
F_{2,\bg}(\rho,\bt)
=
\sumsub{
\bbf\in \bN^I
\\
(0,\bbf)\in \Nindprim 
\\
\Minu{j\in J}(g_j+\sum_{i\in I_j} a_{i}f_i)\geq 2
}
\nuzXJ (\bg,\bbf)
(\rho^{\Minu{j\in J}(g_j+\sum_{i\in I_j} a_{i}f_j)}
-\rho)
\bt^{\bbf}
\end{equation}
Soit $\scI$ la classe des sous-ensembles $K\subset I$ de cardinal $\#J$
tel que $K\in \classadminc$ 
et $K\cap I_j$ est un singleton pour tout $j\in J$.
Compte tenu du point \ref{item:1:hyp:noninter} de l'hypothèse \ref{hyp:noninter}, 
on voit alors que si $\bbf\in \bN^I$ 
vérifie les conditions sous le signe somme dans \eqref{eq:F2grhobt},
il existe $K\in \scI$  
tel que $f_i\geq 1$ si et seulement si $i\in K$. On a alors
(\cf la remarque \ref{rem:sum:bbfK:n0})
$\nuzXJ (\bg,\bbf)=\nuzXJ (\bg,\bbf_{K})$. 
La fonction $F_{2,\bg}$  se réécrit donc
comme une somme sur tous les
$K\in \scI$ des fonctions 
\begin{equation}\label{eq:expr:wtI}
\nuzXJ (\bg,\bbf_{K})
\sumsub{
\bbf\in \bN_{>0}^J
\\
\Minu{j\in J}(g_j+a_{j}f_j)\geq 2
}
(\rho^{\Minu{j\in J}(g_j+a_{j}f_j)}
-\rho)
\bt^{\bbf}.
\end{equation}
Pour alléger l'écriture, on a fixé dans l'expression précédente une identification de $K$ à $J$.
Soit $J_1\eqdef\{j\in J, a_j=1\}$.
D'après le point \ref{item:2:hyp:noninter} de l'hypothèse \ref{hyp:noninter}, $J_1$ est non vide. 
Notons $m$ le plus petit commun multiple de la famille $(a_j)_{j\in J}$.
L'expression \eqref{eq:expr:wtI} se réécrit alors, 
compte tenu de la proposition
\ref{prop:calc:sergen},
\begin{equation}
\nuzXJ (\bg,\bbf_{K})
\sumsub{
\bbf\in \bN_{>0}^K
\\
\forall j\in J_1,\,\,g_j=0\imply f_j\geq 2
}
(\rho^{\Minu{j\in J}(g_j+a_j\,f_j)}
-\rho)
\bt^{\bbf}
\end{equation}
\begin{equation}
=
\nuzXJ (\bg,\bbf_{K})
\prod_{j\in J_1} t_j^{2-g_j}
\prod_{j\in J\setminus J_1} t_j
\sumsub{
\bbf\in \bN_{\geq 0}^J
}
(\rho^{2+\Min((f_j)_{j\in J_1},(g_j+a_j-2+f_j)_{j\in J\setminus J_1})}
-\rho)
\bt^{\bbf}
\end{equation}
\begin{equation}\label{eq:expr:G2}
=
\nuzXJ (\bg,\bbf_{K})
\rho
\prod_{j\in J_1} t_j^{2-g_j}
\prod_{j\in J\setminus J_1} t_j
\left[
\frac{
\rho\,
\wt{\cF}(\rho,(a_j)_{j\in J},\bnu,\bt)}{
(1-\rho^m\,\produ{j\in J}  t_j^{\frac{m}{a_j}})
\produ{i\in J} (1-t_j)}
-
\frac{1}{\produ{i\in J} 1-t_j}
\right]
\end{equation}
avec $\bnu=((0,\dots,0)_{j\in J_1},(g_j+a_j-2)_{j\in J\setminus J_1})$.
Remarquons que si l'on fixe tous les paramètres sauf $(g_j)\in \{0,1\}^{J_1}$,
l'expression précédente vaut à une constante près 
$\nuzXJ ((g_j)_{j\in J_1},(g_j)_{j\in J\setminus J_1},\bbf_{K})\prod_{j\in J_1}t_j^{-g_j}$.

D'après la proposition \ref{prop:calc:sergen}, on a 
\begin{equation}
\deg_{\rho,\bt}(
\wt{\cF}
(\rho,(a_j)_{j\in  J},\bnu,\bt^{-1})
)
\leq 0.
\end{equation}
On a également
\begin{equation}
\deg_{\rho,\bt}\left(
\rho
\prod_{j\in J_1} t_j^{g_j-2}
\prod_{j\in J\setminus J_1} t_j^{-1}
\right)
=1-\#J_1-\# J +\sum_{j\in J_1} g_j \leq \abs{\bg}-3
\end{equation}
(rappelons qu'on a $\# J_1\geq 1$ et $\# J\geq 3$).

Finalement, compte tenu du fait que $J_1$ est non vide, on a
\begin{equation}
m-\sum_{i\in J} \frac{m}{a_j}\leq -(\#J_1-1)-(\#J-\# J_1)\leq -2.
\end{equation}
Ceci conclut la démonstration du deuxième point.

Pour le troisième point, 
on a la majoration
\begin{equation} 
F_{\jz,1,\bg}(\rho,\bt)
\leq
\sum_{\bbf\in \bN^I}
\sum_{0\leq \bbf'\leq \bbf}\abs{\muzX (\bg,\bbf')}
\rho^{
\scF(\Minu{j\in J}(g_j+\sum_{i\in I_j} a_{i}f_i))
}
\tau^{
\scF(
\Minu{j\in J\setminus \{\jz\}}(g_j+\sum_{i\in I_j} a_{i}f_i)
)
}
\bt^{\bbf}.
\end{equation}
Notons $\frac{H_{\jz,1,\bg}(\rho,\bt)}{\prod_{i\in I}(1-t_i)}$ cette dernière expression.
Un calcul similaire à celui mené pour le premier point montre qu'on a 
\begin{equation}
H_{\jz,1,\bg}(\rho,\bt)
=
\sum_{\bm\in \{0,1\}^I}
\abs{\muzX (\bg,\bm)}
\,
\bt^{\bm}
\,
H_{\bm,\jz,1,\bg}(\rho,\tau,\bt)
\end{equation}
avec, pour tous $\bg$, $\bm$,
\begin{equation}
H_{\bm,\jz,1,\bg}(\rho,\tau,\bt)
=
1+(\rho\,\tau-1)
\prodsub{
j\in J
\\
g_j+\sum_{i\in I_j}m_i=0
}
(1-\prod_{i\in I_j}(1-t_i))
\end{equation}
si $g_{\jz}\neq 0$ ou s'il existe $i\in I_{\jz}$ tel que $m_i\neq 0$,
et
\begin{equation}
H_{\bm,\jz,1,\bg}(\rho,\tau,\bt)
=
1+
(\rho\,\tau-1)
\prodsub{
j\in J
\\
g_j+\sum_{i\in I_j}m_i=0
}
(1-\prod_{i\in I_j}(1-t_i))
+
\tau
\prod_{i\in I_{\jz}}(1-t_i)
\prodsub{
j\in J\setminus \{\jz\}
\\
g_j+\sum_{i\in I_j}m_i=0
}
(1-\prod_{i\in I_j}(1-t_i))
\end{equation}
si $g_{\jz}=0$ et pour tout $i\in I_{\jz}$, $m_i=0$.
On conclut de manière similaire au premier point.

La démonstration du quatrième point est reportée dans un appendice.
L'idée est similaire à celle utilisée pour le deuxième point, avec
quelques complications techniques.

Les deux derniers points se déduisent aussitôt des quatre premiers par
application du lemme \ref{lemme:maj:seriegen}, à l'exception de \eqref{eq:sum:becG:gamma}
et \eqref{eq:sum:becG:gamma:eps}.
Or on a 
\begin{equation}
\sum_{\becG\in \diveffc_{\leq 1}^J}
\prod_{v} H_{v(\becG)}(q_v,q_v^{-1})
q^{-\abs{\deg (\becG)}}
\\
=
\prod_v
\sum_{\bg\in \{0,1\}^{J}}
H_{\bg}(q_v,q_v^{-1})
q^{-{\abs{\bg}}}.
\end{equation}
Tout d'abord, d'après la remarque qui suit \eqref{eq:expr:G2} et la remarque
\ref{rem:sum:bbfK:n0}, on a 
\begin{equation}
\sum_{\bg\in \{0,1\}^{J}}H_{2,\bg}(q_v,q_v^{-1})q^{-{\abs{\bg}}}=0.
\end{equation}
Par ailleurs, d'après \eqref{eq:expr:G}, on a
\begin{multline}
\sum_{\bg\in \{0,1\}^{J}}
H_{1,\bg}(q_v,q_v^{-1})
q^{-{\abs{\bg}}}
=
\sum_{\be=(\bg,\bbf)\in \{0,1\}^{I\cup J}}
\muzX (\be)
\,
q_v^{\,-\abs{\be}}
\,
\left[
1+(q_v-1)
\prodsub{
j\in J
\\
g_j+\sum_{i\in I_j}f_i=0
}
(1-(1-q_v^{-1})^{\# I_j})
\right]
\end{multline}
D'après le lemme \ref{lm:nbre:pts:tors}
et \cite[Lemme 1.25]{Bou:compt}, on a donc
\begin{equation}
\sum_{\bg\{0,1\}^{J}}
H_{\bg}(q_v,q_v^{-1})
q^{-{\abs{\bg}}}
=
\sum_{\be=(\bg,\bbf)\in \{0,1\}^{I\cup J}}
\muzX (\be)
\,
\frac{\#\tors_{X,\be}(\kappa_v)}
{q_v^{\dim(\tors_X)}}
=
(1-q_v^{-1})^{\rg(\Pic(X))}
\frac{\# {X(\kappa_v)}}
{q_v^{\dim(X)}}
\end{equation}
d'où \eqref{eq:sum:becG:gamma}~; \eqref{eq:sum:becG:gamma:eps}
en découle facilement.
\end{proof}

\section{Comptage de sections globales}
On considère une hypersurface intrinsèque linéaire $X$ et
un ensemble admissible de variables $J$ (\cf la sous-section \ref{subsec:hypintlin}). 
On suppose en outre que $\dim(X)=2$ (en d'autres termes que $\#J=3$).
Pour $\jz\in J$, $y\in \Pic(X)^{\vee}$ et $(\becG,\becD)\in
\diveff(\courbe)^{J\times I}$, on pose
\begin{equation}\label{eq:phijz}
\varphi_{\jz}(y,\becG,\becD)
\eqdef
\acc{y}{\sum_{j\neq \jz}\scG_j-\Dtot}-\sum_{j\neq \jz}\deg(\ecG_j)
+
\deg \pgcdu{j\in  \{2,3\}}\left(\ecG_j+\sum_{i\in I_j} b_{i,j}\ecD_i\right),
\end{equation}
\begin{equation}\label{eq:psijz}
\psi_{\jz}(y,\becG,\becD)
\eqdef
\acc{y}{\scG_{\jz}}
-\deg(\ecG_{\jz})
+
\deg \pgcdu{j\in J}\left(\ecG_j+\sum_{i\in I_j} b_{i,j}\ecD_i\right)
-\deg \pgcdu{j\in  J\setminus \{\jz\}}\left(\ecG_j+\sum_{i\in I_j} b_{i,j}\ecD_i\right)
\end{equation}
\begin{equation}\label{eq:theta}
\text{et}\quad
\Theta(y,\becG,\becD)
\eqdef
\varphi_{\jz}(y,\becG,\becD)+\psi_{\jz}(y,\becG,\becD).
\end{equation}
On notera que $\Theta$ ne dépend pas du choix de $\jz$.
Le lemme suivant découle facilement de \cite[Lemme 3.5 et Corollaire
3.6]{Bou:compt}.
\begin{lemme}\label{lm:compt}
Soit $y\in\Pic(X)^{\vee}\cap \ceff(X)^{\vee}$, 
et $(\becG,\becD)\in \diveff(\courbe)^{J\times I}$
vérifiant $\deg(\ecD_i)=\acc{y}{\scF_i}$ pour $i\in I$.
\begin{enumerate}
\item\label{item:lm:compt:1}
Supposons $\varphi_{\jz}(y,\becG,\becD)\geq 2\,\gc-1$ et 
$\psi_{\jz}(y,\becG,\becD)\geq 2\,\gc-1$. Alors
\begin{equation}
\fonc(\becG,\becD)
=
q^{(1-\gc)\dim(X)+\Theta(y,\becG,\becD)}
\end{equation}
et
\begin{equation}
\fonc_{\jz}(\becG,\becD)
=
q^{(1-\gc)\dim(X)+\varphi_{\jz}(y,\becG,\becD)}.
\end{equation}
\item\label{item:lm:compt:2}
Supposons $\varphi_{\jz}(y,\becG,\becD)\geq 0$
Alors
\begin{equation}
\fonc_{\jz}(\becG,\becD)
\leq 
q^{\dim(X)+\varphi_{\jz}(y,\becG,\becD)}.
\end{equation}
\item
Supposons $\psi_{\jz}(y,\becG,\becD)<0$. 
Alors
\begin{equation}
\fonc^{\ast}(\becG,\becD)=0.
\end{equation}
\item\label{item:lm:compt:avant:dern}
Supposons $\varphi_{\jz}(y,\becG,\becD)\leq 2\,\gc-2$. 
Alors
\begin{equation}
\fonc^{\ast}(\becG,\becD)\leq q^{\dim(X)+2\,\gc-2+\psi_{\jz}(\becG,\becD)}.
\end{equation}
\item\label{item:lm:compt:dern}
Supposons $\varphi_{\jz}(y,\becG,\becD)\geq 0$ et 
$\psi_{\jz}(y,\becG,\becD)\geq 0$.
Alors 
\begin{equation}
\fonc^{\ast}(\becG,\becD)\leq q^{\dim(X)+\Theta(y,\becG,\becD)}.
\end{equation}
\end{enumerate}
\end{lemme}

\section{Le résultat principal}\label{sec:res:main}
Supposons $k$ fini. 
Soit $f\,:\,\Pic(X)^{\vee}\cap \ceff(X)^{\vee}\to \bR$ une
fonction. Soit $\cP$ une partie de $\ceff(X)^{\vee}$ qui est une union
finie de cônes.
Pour  $\alpha\in \{I,II\}$,
on dit que $f$ est un terme d'erreur 
de type $\alpha$ sur $\cP$ si  
\begin{itemize}
\item
cas $\alpha=I$~: la série
\begin{equation}
\sum_{y\in \cP\cap \Pic(X)^{\vee}}\,
f(y)
\,
t^{\,\acc{y}{-\can{X}}}
\end{equation}
est $q^{-1}$-contr\^ol\'ee \`a l'ordre $\rg(\Pic(X))-1$.
\item
cas $\alpha=II$:
on a 
\begin{equation}
\lim_{
\substack{
y\in \cP\cap \Pic(X)^{\vee}
\\
\dist(y,\partial \cP)\to +\infty
}
}
q^{\,\acc{y}{\can{X}}}\,f(y)=0.
\end{equation}
\end{itemize}
On dit que $f$ est un terme d'erreur 
de type $\alpha\in \{I,II\}$ si $f$ est un terme d'erreur 
de type $\alpha$ sur $\ceff(X)^{\vee}$. 

Pour $\alpha\in \{I,II\}$, on dit que $f$ est un terme dominant de type $\alpha$ 
sur $\cP$ (respectivement un terme dominant de type $\alpha$)
si $f(y)-q^{\acc{y}{-\can{X}}}\gamma(X)$ est un terme d'erreur de type
  $\alpha$  sur $\cP$ (respectivement un terme d'erreur de type $\alpha$).
On emploiera l'expression \og de type I fort\fg\ quand la propriété
correspondante vaut sur toute partie $\cP'$ de $\cP$ qui est une union
finie de cônes.
\begin{rem}\label{rem:pte:plus:forte}
Aucune des deux propriétés \og $f$ est un terme d'erreur de type
$\alpha$ sur $\cP$\fg\ pour $\alpha\in \{I\text{ fort},II\}$ n'implique a priori l'autre.
Cependant, en supposant que $\cP$ est un cône et que 
 $\{x_0,\dots,x_r\}$ sont des éléments non nuls de son dual, elles
 découlent toutes les deux de la propriété plus forte suivante~:
\begin{equation}
\exists \eps>0,\quad \forall y\in \cP\cap \Pic(X)^{\vee},\quad 
\abs{q^{\acc{y}{\can{X}}}\,f(y)}
\ll \sum_{i=0}^r q^{-\eps \acc{y}{x_i}}.
 \end{equation}
C'est immédiat pour $\alpha=II$
et pour $\alpha=I\text{ fort}$, on utilise le point 1 de \cite[Lemme 13]{Bou:fam}.
\end{rem}

Supposons $k=\bQ$. 
Soit, pour presque tout premier $p$, 
$f_{p,r}\,:\,\Pic(X)^{\vee}\cap \ceff(X)^{\vee}\to \bR$ 
une famille de fonction indexées par $r\in \bN_{>0}$.
Soit $\cP$ une partie de $\ceff(X)^{\vee}$.
On dit que $\{f_{p,r}\}$ est un terme d'erreur sur $\cP$ s'il existe
 une constante positive $M$ dépendant de $\courbe$, $X$, et $\cP$
telle que pour tout  $y\in \ceff(X)^{\vee}\cap \Pic(X)^{\vee}$ 
vérifiant $\dist(y,\partial \cP)\geq M$
on a pour presque tout $p$
\begin{equation}
\lim_{
\substack{
r\to +\infty
}
}
p^{\,r[(\gc-1)\dim(X)+\acc{y}{\can{X}}]}\,f_{p,r}(y)
=0.
\end{equation}
On dit que $\{f_{p,r}\}$ est un terme d'erreur 
si $\{f_{p,r}\}$ est un terme d'erreur sur $\ceff(X)^{\vee}$. 
On dit que $\{f_{p,r}\}$ est un terme dominant sur $\cP$
(respectivement un terme dominant)
si, pour presque tout $p$,
$
f_{p,r}(y)-p^{\,r[(1-\gc)\dim(X)+\acc{y}{-\can{X}}]}
$
est un terme d'erreur sur $\cP$ (respectivement un terme d'erreur).

~

On considère toujours une hypersurface intrinsèque linéaire $X$ telle que
$\dim(X)=2$ et un ensemble admissible de variables $J$. 
Soit $y\in \Pic(X)^{\vee}\cap \ceff(X)^{\vee}$.
Si $k$ est fini, on pose
\begin{equation}\label{eq:defn0}
n^J_0(y)
\eqdef
\sum^y
\nuXJ.q^{(1-\gc)\dim(X)+\Theta}
\end{equation}
\begin{multline}
n^J_1(y)\eqdef \sum^y _{\exists j\in J,\quad
\psi_j<0} 
\nuXJ.q^{(1-\gc)\dim(X)+\Theta}
\\
+
\sumsuby{
\exists j\in J,\quad \varphi_j\geq 2\,\gc-1\\
\forall j\in J,\quad \psi_j\geq 0} 
\nuXJ.(q^{(1-\gc)\dim(X)+\Theta}-\fonc^{\ast})
\end{multline}
et
\begin{equation}
n^J_2(y)
\eqdef
\sumsuby{
\forall j\in J,\quad \varphi_j\leq 2\,\gc-2\\
\forall j\in J,\quad \psi_j\geq 0
} \nuXJ.(q^{(1-\gc)\dim(X)+\Theta}-\fonc^{\ast}).
\end{equation}
Si $k=\bQ$, on définit de manière analogue $n^J_{0,p,r}(y)$, $n^J_{1,p,r}(y)$ et $n^J_{2,p,r}(y)$.
\begin{thm}\label{thm:main}
\begin{enumerate}
\item
Soit $y\in \Pic(X)^{\vee}\cap \ceff(X)^{\vee}$. Si $k$ est fini, on a
\begin{equation}
\# \HOM_{X_0}(\courbe,X,y)(k)=n^J_0(y)+n^J_1(y)+n^J_2(y).
\end{equation}
Si $k=\bQ$, on a pour presque tout $p$ et tout $r\geq 1$
\begin{equation}
\# \HOM_{X_0}(\courbe,X,y)(\fpr)=n^J_{0,p,r}(y)+n^J_{1,p,r}(y)+n^J_{2,p,r}(y).
\end{equation}
\item
Pour $\alpha\in\{I\text{ fort},II\}$, 
$n^J_0$ est un terme dominant de type $\alpha$~;
$\{n^J_{0,p,r}\}$ est un terme dominant.
\item
Pour $\alpha\in\{I\text{ fort},II\}$, 
$n^J_0$ est un terme d'erreur de type $\alpha$~;
$\{n^J_{1,p,r}\}$ est un terme d'erreur.
\item
Soit 
$\cC_{\lambda}
\eqdef
\{y\in \ceff(X)^{\vee},\quad \acc{y}{(1-\lambda).(\scG_{\ju
  }+\scG_{\jd })-\Dtot}\geq 0
\}$.
Pour tout $\lambda>0$,
et pour $\alpha\in\{I\text{ fort},II\}$, 
$n^J_2$ est un terme d'erreur de type $\alpha$ sur $\cC_{\lambda}$~;
pour tout $\lambda>0$, $\{n^J_{2,p,r}\}$ est un terme d'erreur sur $\cC_{\lambda}$.
\end{enumerate}
\end{thm}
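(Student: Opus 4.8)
Le premier point est une identité de comptabilité~: il résulte de la formule de relèvement \eqref{eq:form:relevement} (resp. \eqref{eq:form:relevement:pr} lorsque $k=\bQ$) en découpant le domaine de $\sum^y$ en trois parties suivant les signes des $\psi_j$ et des $\varphi_j$ --- celle où $\psi_j<0$ pour un $j\in J$ (sur laquelle $\fonc^{\ast}=0$ d'après le lemme \ref{lm:compt}), celle où $\psi_j\geq 0$ pour tout $j$ et $\varphi_j\geq 2\gc-1$ pour un $j$, et celle où $\psi_j\geq 0$ et $\varphi_j\leq 2\gc-2$ pour tout $j$ --- puis en reportant dans chaque morceau la valeur principale $q^{(1-\gc)\dim(X)+\Theta}$ de $\fonc^{\ast}$ et sa correction $q^{(1-\gc)\dim(X)+\Theta}-\fonc^{\ast}$ données par le lemme \ref{lm:compt}.

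Pour le deuxième point, on part de l'identité
\[
\Theta(y,\becG,\becD)=\acc{y}{-\can{X}}-\abs{\deg(\becD)}-\abs{\deg(\becG)}+\deg\pgcd_{j\in J}\left(\ecG_j+\sum_{i\in I_j}b_{i,j}\ecD_i\right),
\]
valable dès que $\deg(\ecD_i)=\acc{y}{\scF_i}$, qui se déduit de \eqref{eq:phijz}--\eqref{eq:theta}, de l'écriture $\scG_j=\sum_{i\in I}a_{i,j}\,\scF_i$ et de la formule d'adjonction \eqref{eq:adj}. En posant $\bd_y=(\acc{y}{\scF_i})_{i\in I}$ et en tenant compte de la remarque \ref{rem:nuXJnul}, la définition \eqref{eq:defn0} devient
\[
n^J_0(y)=q^{(1-\gc)\dim(X)+\acc{y}{-\can{X}}-\abs{\bd_y}}\sumsub{\becG\in\diveffc^J_{\leq 1}\\ \deg(\ecG_j)\leq\acc{y}{\scG_j}}q^{-\abs{\deg(\becG)}}\,\scM(\bd_y,\becG).
\]
On remplace alors $q^{-\abs{\bd_y}}\scM(\bd_y,\becG)$ par $\cprinc(\becG)$ à l'aide de la majoration \eqref{eq:maj:NdG} de la proposition \ref{prop:res:sergen}, on somme sur $\becG$ grâce à \eqref{eq:sum:becG:gamma} et \eqref{eq:sum:becG:gamma:eps}, et la convergence de $\sum_{\becG}q^{-\abs{\deg(\becG)}}\prod_{v,\,v(\becG)\neq 0}Cq_v^{\abs{v(\becG)}-2+\eps}$ donne
\[
\abs{q^{\acc{y}{\can{X}}}n^J_0(y)-\gamma(X)}\ll_{\eps}q^{-\eps\sum_{j\in J}\acc{y}{\scG_j}}+\sum_{i\in I}q^{-\eps\acc{y}{\scF_i}}.
\]
Les $\scF_i$ et les $\scG_j$ étant des éléments non nuls de $\ceff(X)$, la remarque \ref{rem:pte:plus:forte} entraîne que $n^J_0$ est un terme dominant de type $I$ fort et de type $II$~; le cas $k=\bQ$ est identique, via \eqref{eq:maj:Nd0:pr} et $\lim_r c(p,r)=\lim_r C(p,r,\eps)=1$.

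Pour le troisième point, on traite les deux sommes de $n^J_1(y)$ séparément. Dans celle où $\psi_{\jz}<0$ pour un $\jz$, cette condition force $\deg\pgcd_{j\in J\setminus\{\jz\}}(\ecG_j+\sum_i b_{i,j}\ecD_i)\geq\acc{y}{\scG_{\jz}}-\deg(\ecG_{\jz})+1$~; en écrivant, pour $0<\eta<1$, $q^{\deg\pgcd_J}=q^{\deg\pgcd_J+\eta\deg\pgcd_{J\setminus\{\jz\}}}\cdot q^{-\eta\deg\pgcd_{J\setminus\{\jz\}}}$, le second facteur produit le gain $q^{-\eta\acc{y}{\scG_{\jz}}}$, et la somme restante sur les $\becD$ de degré $\bd_y$ relève de la majoration \eqref{eq:maj:ceps} appliquée à $\scM_{\jz,\eta}$~: cette somme est donc $\ll q^{(1-\gc)\dim(X)+\acc{y}{-\can{X}}}q^{-\eta\acc{y}{\scG_{\jz}}}$. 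Dans la seconde ($\varphi_j\geq 2\gc-1$ pour un $j$, $\psi_j\geq 0$ pour tout $j$), les points \ref{item:lm:compt:1}, \ref{item:lm:compt:2} et \ref{item:lm:compt:dern} du lemme \ref{lm:compt} montrent que $q^{(1-\gc)\dim(X)+\Theta}-\fonc^{\ast}$ s'écrit comme une combinaison bornée de termes $q^{\dim(X)+\varphi_j}=q^{\dim(X)+\Theta}q^{-\psi_j}$~; le facteur $q^{-\psi_j}$ se convertit soit en une restriction de $\deg(\ecG_j)$ au voisinage de $\acc{y}{\scG_j}$ (exploitée via le poids $q^{-\deg(\ecG_j)}$), soit en une minoration de $\deg\pgcd_{J\setminus\{j\}}$ (exploitée via la régularisation $\scM_{j,\eta}$ et de nouveau \eqref{eq:maj:ceps}). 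Dans tous les cas $\abs{q^{\acc{y}{\can{X}}}n^J_1(y)}\ll_{\eps}\sum_i q^{-\eps\acc{y}{x_i}}$ pour des $x_i$ non nuls de $\ceff(X)$, et la remarque \ref{rem:pte:plus:forte} conclut~; le cas $k=\bQ$ est analogue avec les versions $p,r$ des énoncés.

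Le quatrième point est le c\oe ur de la preuve. Sur le domaine de $n^J_2(y)$ on a $\varphi_j\leq 2\gc-2$ pour tout $j$~; appliquée à $\jz$, cette inégalité donne $\deg\pgcd_{j\in J\setminus\{\jz\}}(\ecG_j+\sum_i b_{i,j}\ecD_i)\leq 2\gc-2+\sum_{j\neq\jz}\deg(\ecG_j)-\acc{y}{\scG_{\ju}+\scG_{\jd}-\Dtot}$, d'où, ce pgcd étant positif, $\abs{\deg(\becG)}\geq\acc{y}{\scG_{\ju}+\scG_{\jd}}-\acc{y}{\Dtot}-2\gc+2$~; combinée à la définition de $\cC_{\lambda}$ (soit $\acc{y}{\Dtot}\leq(1-\lambda)\acc{y}{\scG_{\ju}+\scG_{\jd}}$), elle entraîne $\abs{\deg(\becG)}\geq\lambda\acc{y}{\scG_{\ju}+\scG_{\jd}}-2\gc+2$, c'est-à-dire que $\abs{\deg(\becG)}$ croît proportionnellement à $\acc{y}{\scG_{\ju}+\scG_{\jd}}$ sur cette région. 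Par ailleurs, les points \ref{item:lm:compt:avant:dern} et \ref{item:lm:compt:dern} du lemme \ref{lm:compt} majorent $\abs{q^{(1-\gc)\dim(X)+\Theta}-\fonc^{\ast}}$ par $q^{\dim(X)+2\gc-2+\psi_{\jz}}$~; en sommant sur les $\becD$ via \eqref{eq:maj:ceps} (ou \eqref{eq:maj:ceps:pr}), on majore $q^{\acc{y}{\can{X}}}n^J_2(y)$ par une somme sur $\becG\in\diveffc^J_{\leq 1}$ de termes $q^{-c\abs{\deg(\becG)}}\prod_{v,\,v(\becG)\neq 0}Cq_v^{\abs{v(\becG)}-2+\eps}$ (pour une constante $c>0$), laquelle, vu la minoration ci-dessus et pour $\eps$ assez petit, est $\ll_{\lambda}q^{-\eps'\acc{y}{\scG_{\ju}+\scG_{\jd}}}$~; comme $\cC_{\lambda}$ est un cône et $\scG_{\ju}+\scG_{\jd}$ un élément non nul de son dual, la remarque \ref{rem:pte:plus:forte} permet de conclure, de même pour $k=\bQ$. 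C'est précisément cette dernière estimation qui est l'obstacle principal~: il faut y extraire soigneusement la croissance de $\abs{\deg(\becG)}$ dans $\{\varphi_j\leq 2\gc-2\ \forall j\}\cap\cC_{\lambda}$ et la confronter aux bornes de produit eulérien de la proposition \ref{prop:res:sergen} pour en tirer une décroissance effective, uniforme sur tous les sous-cônes de $\cC_{\lambda}$.
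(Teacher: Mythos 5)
Vos points 1, 2 et 3 suivent pour l'essentiel la démonstration du texte (découpage du domaine de $\sum^y$ selon les signes de $\psi_j$ et $\varphi_j$ pour le point 1~; réécriture de $n^J_0$ via $\scM$ puis application de \eqref{eq:maj:NdG}, \eqref{eq:sum:becG:gamma} et \eqref{eq:sum:becG:gamma:eps} pour le point 2~; réduction à $\sum^y\abs{\nuXJ}q^{\varphi_j+(1-\eta)\psi_j}$, c'est-à-dire à $\scM_{j,\eta}$ et \eqref{eq:maj:ceps}, pour le point 3). L'identité $\Theta=\acc{y}{-\can{X}}-\abs{\deg(\becD)}-\abs{\deg(\becG)}+\deg\pgcd_J(\cdots)$ que vous utilisez est correcte et ces trois points sont convaincants.

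En revanche, le point 4 contient une lacune réelle à l'étape décisive. Après avoir majoré $\abs{q^{(1-\gc)\dim(X)+\Theta}-\fonc^{\ast}}$ par $q^{\dim(X)+2\gc-2+\psi_{\jz}}$ et sommé sur $\becD$ via \eqref{eq:maj:ceps}, ce que l'on obtient réellement est
\begin{equation*}
q^{\acc{y}{\can{X}}}\abs{n^J_2(y)}\ll q^{-\acc{y}{\scG_{\ju}+\scG_{\jd}-\Dtot}}\sumsub{\becG\in\diveffc^J_{\leq 1}\\ \deg(\ecG_j)\leq\acc{y}{\scG_j}}q^{-\deg(\ecG_{\jz})}\,q^{-\deg\pgcd(\ecG_{\ju},\ecG_{\jd})}\prod_{v,\,v(\becG)\neq 0}C.q_v^{\abs{v(\becG)}-2+\eps},
\end{equation*}
et non une somme de termes $q^{-c\abs{\deg(\becG)}}\prod_v(\cdots)$ avec $c>0$~: aucune des estimations citées ne fournit de décroissance en $\deg(\ecG_{\ju})$ ou $\deg(\ecG_{\jd})$ individuellement. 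Or sans un tel poids la série en $\becG$ diverge (pour $v$ divisant exactement l'un des $\ecG_{\ju},\ecG_{\jd}$ le facteur local vaut $C.q_v^{-1+\eps}$, d'exposant $>-1$), et sa version tronquée à $\deg(\ecG_j)\leq\acc{y}{\scG_j}$ croît avec $y$ d'une façon qui peut absorber le gain $q^{-\acc{y}{\scG_{\ju}+\scG_{\jd}-\Dtot}}$~; la minoration $\abs{\deg(\becG)}\geq\lambda\acc{y}{\scG_{\ju}+\scG_{\jd}}-2\gc+2$ que vous dégagez restreint le domaine de sommation mais ne rend pas convergente une série divergente. L'ingrédient manquant est précisément la majoration \eqref{eq:maj:psi1:bis} du texte~: on écrit $\acc{y}{\scG_{\ju}+\scG_{\jd}-\Dtot}=\acc{y}{(1-\lambda)(\scG_{\ju}+\scG_{\jd})-\Dtot}+\lambda\acc{y}{\scG_{\ju}+\scG_{\jd}}$ et on convertit le second terme, grâce aux contraintes $\deg(\ecG_j)\leq\acc{y}{\scG_j}$, en le poids $q^{-\lambda(\deg(\ecG_{\ju})+\deg(\ecG_{\jd}))}$~; c'est ce poids, joint à $q^{-\deg(\ecG_{\jz})-\deg\pgcd(\ecG_{\ju},\ecG_{\jd})}$, qui rend l'exposant local strictement inférieur à $-1$ (c'est la vérification \eqref{eq:la:maj:bis}) et laisse le facteur $q^{-\acc{y}{(1-\lambda)(\scG_{\ju}+\scG_{\jd})-\Dtot}}$ comme décroissance vers l'intérieur de $\cC_{\lambda}$. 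Votre dernière phrase reconnaît d'ailleurs que cette estimation reste à faire~: telle quelle, la démonstration du point 4 n'est pas complète.
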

Avant d'en venir à la démonstration de ce théorème, nous en donnons
quelques corollaires immédiats.
\begin{cor}
Pour $\alpha\in \{I,I\text{ fort},II\}$, \og Manin $\alpha$\fg\ vaut
pour $(X,X_0)$ si et seulement si $n^J_2$ est 
un terme d'erreur de
type $\alpha$. \og Manin géométrique\fg\ 
vaut pour $(X,X_0)$ si et seulement si $\{n^J_{2,p,r}\}$ est un terme d'erreur.
\end{cor}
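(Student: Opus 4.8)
The plan is to deduce the corollary directly from Theorem~\ref{thm:main} by unwinding the definitions introduced in Sections~\ref{sec:conj} and~\ref{sec:res:main}. First I note that, for $\alpha\in\{I,I\text{ fort},II\}$ and $k$ fini, the assertion that $(X,X_0)$ vérifie \og Manin $\alpha$\fg\ is, literally by those definitions, the same as the assertion that $y\mapsto\#\HOM_{X_0}(\courbe,X,y)(k)$ is un terme dominant de type $\alpha$ sur $\ceff(X)^{\vee}$: for $\alpha=I$ both unwind to the $q^{-1}$-contrôle à l'ordre $\rg(\Pic(X))-1$ of $\sum_y(\#\HOM_{X_0}(\courbe,X,y)(k)-\gamma(X)q^{\acc{y}{-\can X}})t^{\acc{y}{-\can X}}$, for $\alpha=II$ both are the same limite, for $\alpha=I$ fort one intersects with each sous-cône $\cP'$, and the implication \og type I fort\fg\ $\imply$ \og type I\fg\ is tautological. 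Likewise, for $k=\bQ$, Lemme~\ref{lm:weil} turns \og Manin géométrique\fg\ for $(X,X_0)$ into the statement that $\{y\mapsto\#\HOM_{X_0}(\courbe,X,y)(\fpr)\}$ is un terme dominant sur $\ceff(X)^{\vee}$, since for a given $y$ far from $\partial\ceff(X)^{\vee}$ the geometric irreducibility and expected dimension are equivalent, for presque tout $p$, to $\lim_r p^{r[(\gc-1)\dim X+\acc{y}{\can X}]}\#\HOM_{X_0}(\courbe,X,y)(\fpr)=1$, which is precisely what \og terme dominant\fg\ demands at $y$.

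Next I would record the elementary closure properties used implicitly: a finite sum or difference of termes d'erreur de type $\alpha$ is again un terme d'erreur de type $\alpha$. For $\alpha=I$ this is because if $\abs{a_n}\leq b_n$ and $\abs{a'_n}\leq b'_n$ with $\sum b_nt^n$, $\sum b'_nt^n$ of radius $>q^{-1}$ and meromorphically continuable with poles of order $\leq\rg(\Pic(X))-1$ on the circle $\abs t=q^{-1}$, then so is $\sum(b_n+b'_n)t^n$, while $\abs{a_n\pm a'_n}\leq b_n+b'_n$; for $\alpha=II$ it is because limites add; for $\alpha=I$ fort one restricts to each $\cP'$; and in the case $k=\bQ$ one takes the larger of the two constants $M$ and intersects the two ensembles cofinis of primes. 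In particular, $f$ is un terme dominant de type $\alpha$ if and only if, after subtracting any one fixed terme dominant de type $\alpha$, one is left with un terme d'erreur de type $\alpha$.

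With this in hand the corollary is immediate. Theorem~\ref{thm:main}(1) gives the identity $\#\HOM_{X_0}(\courbe,X,y)(k)=n^J_0(y)+n^J_1(y)+n^J_2(y)$ and its $(p,r)$-analogue; Theorem~\ref{thm:main}(2) says $n^J_0$ is un terme dominant de type $\alpha$ (and $\{n^J_{0,p,r}\}$ un terme dominant); Theorem~\ref{thm:main}(3) says $n^J_1$ is un terme d'erreur de type $\alpha$ (and $\{n^J_{1,p,r}\}$ un terme d'erreur). Subtracting $q^{\acc{\bullet}{-\can X}}\gamma(X)$ (respectively $p^{r[(1-\gc)\dim X+\acc{\bullet}{-\can X}]}$) from the identity, the closure properties above show that $\#\HOM_{X_0}(\courbe,X,\bullet)(k)$ is un terme dominant de type $\alpha$ exactly when $n^J_2$ is un terme d'erreur de type $\alpha$, and similarly $\{\#\HOM_{X_0}(\courbe,X,\bullet)(\fpr)\}$ is un terme dominant exactly when $\{n^J_{2,p,r}\}$ is un terme d'erreur; combining with the first paragraph yields the stated equivalences.

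There is essentially no obstacle: the whole content sits in Theorem~\ref{thm:main}, and the corollary is a bookkeeping reformulation isolating the single summand $n^J_2$. The only point worth emphasizing is that the equivalence is global, on all of $\ceff(X)^{\vee}$, so Theorem~\ref{thm:main}(4) is deliberately \emph{not} used here — it is exactly that part which, by controlling $n^J_2$ only on the regions $\cC_\lambda$, produces the partial Manin results elsewhere in the paper, while the present corollary merely says the full conjectures hold iff this last error term can be controlled everywhere.
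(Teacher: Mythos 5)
Votre démonstration est correcte et coïncide avec celle du texte, qui énonce ce corollaire comme conséquence \og immédiate\fg\ du théorème~\ref{thm:main} sans détailler~: l'identification de \og Manin $\alpha$\fg\ avec \og $\#\HOM_{X_0}(\courbe,X,\bullet)$ est un terme dominant de type $\alpha$\fg, la stabilité des termes d'erreur par somme et différence, et la décomposition $n^J_0+n^J_1+n^J_2$ sont exactement les ingrédients sous-entendus. Votre remarque finale sur le fait que le point~4 du théorème n'intervient pas ici est également juste.
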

\begin{rem}
L'étude de $n^J_2$ dans le présent travail est basée sur la majoration
grossière \eqref{eq:maj:grossiere}. Il semble clair qu'en général le membre de
droite de \eqref{eq:maj:grossiere} ne sera pas un terme d'erreur,
et que seule une étude fine de la différence $q^{(1-\gc)\dim(X)+\Theta}-\fonc^{\ast}$ 
pourra permettre de montrer que $n^J_2$ est bien un terme d'erreur. 
\end{rem}
Pour $\lambda\geq 0$, soit 
\begin{equation}
\cP_{\lambda}=\cupu{J}\,\cC_{\lambda},\quad 
\cP'_{\lambda}=\ceff(X)^{\vee}
\setminus 
\capu{J}\,\{\acc{\bullet}{(1-\lambda).(\scG_{\ju}+\scG_{\jd})-\Dtot}\leq 0\}
\end{equation}
où $J\subset \indsec$ décrit à chaque fois l'ensemble des choix admissibles de
variables, de sorte que $\adh{\cP'_{\lambda}}=\cP_{\lambda}$.
\begin{cor}
Pour tout $\lambda>0$, \og Manin $I$ fort\fg\ vaut pour $(X,X_0)$ sur $\cP_{\lambda}$, 
et \og Manin $II$\fg\  et \og Manin géométrique\fg\ valent pour $(X,X_0)$ sur $\cP'_{\lambda}$.

S'il existe $\lambda>0$ tel que $\cP_{\lambda}=\ceff(X)^{\vee}$, \og
Manin $I$ fort\fg\ vaut pour $(X,X_0)$.

S'il existe $\lambda>0$ tel qu'on ait
$int(\ceff(X)^{\vee})\subset \cP'_{\lambda}$, \og Manin $II$\fg\  
et \og Manin géométrique\fg\ valent pour $(X,X_0)$.
\end{cor}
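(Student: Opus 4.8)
The plan is to read the statement off Theorem~\ref{thm:main} one admissible choice of variables at a time, and then to glue the resulting cone‑by‑cone assertions over the finite set of such choices.

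Fix an admissible choice $J$, write $\cC_\lambda=\cC_\lambda^{(J)}$ for the associated cone, and fix $\alpha\in\{I\text{ fort},II\}$. Suppose first $k$ finite. By the counting formula of Theorem~\ref{thm:main}(1), for every $y\in\Pic(X)^\vee\cap\ceff(X)^\vee$,
\[
\#\HOM_{X_0}(\courbe,X,y)(k)-\gamma(X)q^{\acc{y}{-\can{X}}}
=\bigl(n^J_0(y)-\gamma(X)q^{\acc{y}{-\can{X}}}\bigr)+n^J_1(y)+n^J_2(y).
\]
By Theorem~\ref{thm:main}(2) the first summand is an error term of type $\alpha$, by (3) the second is an error term of type $\alpha$, and by (4) the third is an error term of type $\alpha$ on $\cC_\lambda$. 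Since the class of series $q^{-1}$‑controlled to order $\rg(\Pic(X))-1$ is stable under finite sums (as is the type~$II$ vanishing condition), and since an error term of type $\alpha$ restricts to one on $\cC_\lambda$ (trivially for type~$I$ fort, and for type~$II$ by the elementary fact that, as $\cC_\lambda\subset\ceff(X)^\vee$, a $y$ with $\dist(y,\partial\cC_\lambda)$ large also has $\dist(y,\partial\ceff(X)^\vee)$ large), the left‑hand side is an error term of type $\alpha$ on $\cC_\lambda$; that is, $(X,X_0)$ satisfies \og Manin $\alpha$ sur $\cC_\lambda$\fg. Running the same decomposition with the families $n^J_{i,p,r}$ when $k=\bQ$, and then invoking Lemma~\ref{lm:weil}, shows in addition that for every $y$ with $\dist(y,\partial\cC_\lambda)$ large enough the variety $\HOM_{X_0}(\courbe,X,y)$ is geometrically irreducible of dimension $\acc{y}{-\can{X}}+(1-\gc)\dim(X)$.

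Next I would glue over the finitely many admissible $J$. For \og Manin $I$ fort sur $\cP_\lambda$\fg: writing $\cP_\lambda=\cC_\lambda^{(J_1)}\cup\dots\cup\cC_\lambda^{(J_N)}$ and taking any finite union of cones $\cP'\subset\cP_\lambda$, inclusion–exclusion expresses the Manin generating series attached to $\cP'$ as a finite alternating sum of the analogous series attached to the sets $\cP'\cap\bigcap_{i\in S}\cC_\lambda^{(J_i)}$, $\varnothing\neq S\subset\{1,\dots,N\}$; each such set is again a finite union of cones contained in $\cC_\lambda^{(J_i)}$ for any $i\in S$, hence its series is $q^{-1}$‑controlled to order $\rg(\Pic(X))-1$ by \og Manin $I$ fort sur $\cC_\lambda^{(J_i)}$\fg, and so is the alternating sum. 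For \og Manin $II$\fg\ and \og Manin géométrique\fg\ I would use that $\cP'_\lambda=\ceff(X)^\vee\setminus\bigcap_J\{\ell_J\le0\}$, with $\ell_J=\acc{\bullet}{(1-\lambda)(\scG_{\ju}+\scG_{\jd})-\Dtot}$ a fixed linear form: by the Hoffman error bound for a finite system of linear inequalities there is $c>0$ with $\dist\bigl(y,\bigcap_J\{\ell_J\le0\}\bigr)\le c\max_J\max(0,\ell_J(y))$ for all $y$, so any $y\in\cP'_\lambda$ far from $\partial\cP'_\lambda$ (hence far from $\bigcap_J\{\ell_J\le0\}$ and from $\partial\ceff(X)^\vee$) has $\ell_J(y)$ large for at least one $J$, i.e.\ is far from $\partial\cC_\lambda^{(J)}$ for that $J$. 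As there are only finitely many admissible $J$, the cone‑by‑cone assertions above, used uniformly in $J$, give \og Manin $II$ sur $\cP'_\lambda$\fg\ and (via Lemma~\ref{lm:weil}) \og Manin géométrique sur $\cP'_\lambda$\fg.

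The two global conclusions are then immediate: if $\cP_\lambda=\ceff(X)^\vee$ for some $\lambda>0$, then \og Manin $I$ fort sur $\cP_\lambda$\fg\ is by definition \og Manin $I$ fort\fg; and if $\mathrm{int}(\ceff(X)^\vee)\subset\cP'_\lambda$ for some $\lambda>0$, then $\bigcap_J\{\ell_J\le0\}\cap\ceff(X)^\vee\subset\partial\ceff(X)^\vee$, so $\partial\cP'_\lambda\subset\partial\ceff(X)^\vee$ and every $y$ far from $\partial\ceff(X)^\vee$ is far from $\partial\cP'_\lambda$, whence \og Manin $II$\fg\ and \og Manin géométrique\fg\ follow from their versions on $\cP'_\lambda$. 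The one step that is not pure bookkeeping is the geometric reduction \og far from $\partial\cP'_\lambda$ $\Longrightarrow$ far from $\partial\cC_\lambda^{(J)}$ for some $J$\fg: this is where the Hoffman‑type lemma (equivalently, a compactness argument on the unit sphere for the finitely many fixed forms $\ell_J$) is genuinely needed, and it is precisely why one obtains \og Manin $II$\fg\ and \og Manin géométrique\fg\ only on the slightly smaller region $\cP'_\lambda$, and not on all of $\cP_\lambda$.
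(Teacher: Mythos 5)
Votre preuve est correcte et suit exactement la voie que l'article laisse implicite (il présente ces énoncés comme des \og corollaires immédiats\fg\ du théorème \ref{thm:main} sans rédiger l'argument)~: décomposition $\#\HOM=n^J_0+n^J_1+n^J_2$, application des points 2--4 du théorème pour chaque choix admissible $J$, puis recollement sur les $J$ en nombre fini. Les détails que vous explicitez (stabilité du contrôle par sommes finies, restriction des termes d'erreur à $\cC_{\lambda}$, inclusion--exclusion pour \og Manin I fort\fg, et l'argument de type Hoffman montrant qu'un $y$ loin de $\partial\cP'_{\lambda}$ est loin de $\partial\cC^{(J)}_{\lambda}$ pour au moins un $J$) sont exactement ceux qu'il faut pour rendre l'\og immédiateté\fg\ rigoureuse.
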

\begin{cor}\label{cor:manin:delpezzo}
\og Manin $I$ fort\fg, \og Manin $II$\fg,  et \og Manin géométrique\fg\ valent
pour les trois surfaces de del Pezzo généralisées suivantes~: la sextique
avec singularité $\bA_1$, la quintique avec singularité $\bA_2$ et la
quartique avec singularités $\bA_3+\bA_1$.
Pour les cinq surfaces de del Pezzo généralisées suivantes~: la sextique avec singularité $\bA_2$, les
quintiques avec singularités respectives $\bA_1$ et $\bA_2$, les quartiques
avec singularités respectives $3\,\bA_1$, $\bA_2+\bA_1$, et $\bA_3$, on a
\begin{equation}
\limsup_{\,\cP,\,\,\text{Manin vaut sur }\cP} \frac{\Vol(\cP)}{\Vol(\ceff(X)^{\vee})}=1.
\end{equation}
\end{cor}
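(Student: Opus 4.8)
Les deux assertions se déduisent du théorème \ref{thm:main} et des corollaires qui le précèdent; il ne reste qu'une vérification combinatoire explicite, surface par surface. La première étape consiste à expliciter, pour chacune des huit surfaces et à partir de la classification de \cite{Der:sdp:ut:hyp}, une présentation de l'anneau de Cox --- générateurs $\{s_i\}_{i\in\indsec}$, choix admissible de variables $J=\{\jz,\ju,\jd\}$ (de cardinal $3$, puisque $\dim(X)=2$), partition $\{I_j\}_{j\in J}$ et exposants $b_{i,j}$ --- ainsi que le cône dual $\ceff(X)^{\vee}$, décrit par ses rayons extrémaux dans la base $\{\scF_i\}_{i\in I}$ de $\Pic(X)$. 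On a déjà observé que l'hypothèse \ref{hyp:noninter} est satisfaite dans tous ces cas, de sorte que le théorème \ref{thm:main} et ses corollaires s'appliquent.

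Pour chaque surface et chaque choix admissible $J$, on calcule ensuite la classe $\scG_{\ju}+\scG_{\jd}-\Dtot\in\Pic(X)$, que l'on réexprime commodément en fonction des $b_{i,j}$ grâce à l'égalité $\scG_j+\sum_{i\in I_j}b_{i,j}\scF_i=\Dtot$ (homogénéité de $\cF$), puis le cône $\cC_0=\{y\in\ceff(X)^{\vee}:\acc{y}{\scG_{\ju}+\scG_{\jd}-\Dtot}\geq 0\}$ et, plus généralement, les cônes $\cC_{\lambda}$ ainsi que les ensembles $\cP_{\lambda}$ et $\cP'_{\lambda}$. On prendra garde qu'en faisant varier le rôle de $\jz$ au sein d'une même structure $\{I_j\}$ on obtient en général des cônes $\cC_{\lambda}$ distincts, qu'il faut tous incorporer à l'union $\cP_{\lambda}=\cupu{J}\cC_{\lambda}$.

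Pour les trois premières surfaces (sextique $\bA_1$, quintique $\bA_2$, quartique $\bA_3+\bA_1$), on vérifie directement --- en traitant d'abord chaque rayon extrémal de $\ceff(X)^{\vee}$, puis en recollant les cônes polyédraux ainsi obtenus --- qu'il existe $\lambda>0$ pour lequel $\cP_{\lambda}=\ceff(X)^{\vee}$ et $int(\ceff(X)^{\vee})\subset\cP'_{\lambda}$; le corollaire précédent fournit alors \og Manin $I$ fort\fg, \og Manin $II$\fg\ et \og Manin géométrique\fg\ pour $(X,X_0)$ (pour l'une de ces surfaces la conclusion était déjà acquise d'après \cite{Bou:compt,Bou:fam}, mais la vérification reste immédiate ici). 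Pour les cinq surfaces restantes (sextique $\bA_2$, quintiques $\bA_1$ et $\bA_2$, quartiques $3\bA_1$, $\bA_2+\bA_1$, $\bA_3$), on montre que $\ceff(X)^{\vee}\setminus\cP_{\lambda}$, réunion finie de cônes polyédraux, est de volume tendant vers $0$ quand $\lambda\to 0^{+}$; il suffit pour cela de vérifier que $\ceff(X)^{\vee}\cap\bigcapu{J}\{\acc{\bullet}{\scG_{\ju}+\scG_{\jd}-\Dtot}\leq 0\}$ est de mesure nulle (contenu dans une réunion finie d'hyperplans, en pratique une face propre du dual), la continuité en $\lambda$ permettant ensuite de conclure. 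Comme \og Manin $I$ fort\fg\ (donc \og Manin $I$\fg) vaut sur chaque $\cP_{\lambda}$ avec $\lambda>0$ d'après ce même corollaire, et comme trivialement $\Vol(\cP_{\lambda})\leq\Vol(\ceff(X)^{\vee})$, on obtient en prenant $\cP=\cP_{\lambda}$ et en faisant $\lambda\to 0^{+}$
\begin{equation}
\limsup_{\,\cP,\,\,\text{Manin vaut sur }\cP}\frac{\Vol(\cP)}{\Vol(\ceff(X)^{\vee})}\geq\lim_{\lambda\to 0^{+}}\frac{\Vol(\cP_{\lambda})}{\Vol(\ceff(X)^{\vee})}=1,
\end{equation}
d'où l'égalité voulue.

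L'obstacle n'est donc pas de nature conceptuelle : tout découle mécaniquement du théorème \ref{thm:main} et de ses corollaires une fois les données combinatoires en main. La seule difficulté est le volume des calculs à effectuer : pour chacune des huit surfaces, écrire l'anneau de Cox, décrire $\ceff(X)^{\vee}$, énumérer les choix admissibles de variables et les cônes $\cC_{\lambda}$ correspondants, puis contrôler soit le recouvrement (premier groupe de surfaces), soit la nullité du volume résiduel (second groupe). Les résultats numériques complets pour les autres surfaces de la liste de Derenthal figurent dans la table \ref{tab:results}.
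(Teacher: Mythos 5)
Votre démonstration est correcte et suit essentiellement la même voie que celle du papier : tout se ramène au corollaire précédent, puis à la vérification, surface par surface à partir des données de \cite{Der:sdp:ut:hyp}, que $\cP_{\lambda}$ remplit (ou remplit asymptotiquement quand $\lambda\to 0^{+}$) le cône $\ceff(X)^{\vee}$ — vérification que le papier délègue simplement à un calcul Maple via \cite{Fra:conv} (en constatant $\cP_0=\ceff(X)^{\vee}$ dans tous les cas et $\cP'_{\lambda}\cup\{0\}=\ceff(X)^{\vee}$ pour les trois premières), tandis que vous en décrivez le déroulé à la main. Ni le papier ni votre texte n'exhibent les calculs polyédraux eux-mêmes ; la réduction et les critères à vérifier, qui constituent le seul contenu réel de la preuve, coïncident.
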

Le dernier corollaire découle
aussitôt du corollaire précédent et d'un calcul Maple utilisant 
\cite{Fra:conv} et les données fournies par \cite{Der:sdp:ut:hyp}.
Pour toutes ces surfaces, on vérifie en effet qu'on a $\cP_0=\ceff(X)^{\vee}$ et
pour les trois premières on a $\cP'_{\lambda}\cup \{0\} =\ceff(X)^{\vee}$
pour $\lambda>0$ assez petit. 

On obtient d'ailleurs en toute généralités les résultats numérique présentés dans la table  
\ref{tab:results} pour les 20 surfaces de \cite{Der:sdp:ut:hyp} qui sont des hypersurfaces linéaires intrinsèques.
On notera en particulier que les résultats présentés dans la table 1
de \cite{Bou:moduli} sont sensiblement améliorés.
\begin{table}[ht]
  \centering
\[\begin{array}{|c|c|c|}
  \hline
\text{degré} 
& 
\text{singularités} 
& 
\limsup_{\,\cP,\,\,\text{Manin vaut sur }\cP} \frac{\Vol(\cP)}{\Vol(\ceff(X)^{\vee})}
\\
  \hline
  \hline
  6 & \bA_1 & 1\\
  6 & \bA_2 & 1\\
  5 & \bA_1 & 1\\
  5 & \bA_2 &1\\
  5 & \bA_3 &\geq 0.81\\
  5 & \bA_4 &\geq 0.76\\
  4 &  3\bA_1& 1\\
  4 & \bA_2+\bA_1 &1\\
  4 & \bA_3+\bA_1 &1\\
  4 & \bA_3 &  1\\
  4 & \bA_4 &  \geq 0.72\\
  4 & \bD_4 &  \geq 0.43\\
  4 & \bD_5 &  \geq 0.34\\
  3 & 2\bA_2+\bA_1 & \geq 0.97\\
  3 & \bA_3+2\,\bA_1 & \geq 0.92\\
  3 & \bA_4+\bA_1 & \geq 0.98\\
  3 & \bA_5+\bA_1 & \geq 0.68\\
  3 & \bD_4 & \geq 0.95\\
  3 & \bD_5 & \geq 0.28\\
  3 & \bE_6 & \geq 0.08\\
\hline
\end{array}\]
    \caption{}
  \label{tab:results}
\end{table}
\begin{rem}
En employant des techniques strictement similaires à celles utilisées
ici, on pourrait en fait montrer qu'on peut remplacer dans les
résultats ci-dessus l'ouvert $X_0$ par $X$. 
En particulier, on obtiendrait pour les degrés
$y$ ad hoc que $\HOM_{X_0}(\courbe,X,y)$ est dense dans $\HOM(\courbe,X,y)$.
Les détails seraient cependant assez laborieux (\cf \cite{Bou:moduli}
où ce type d'énoncé est considéré).
\end{rem}

Passons à la démonstration du théorème \ref{thm:main}.
Le premier point du théorème découle aussitôt de
\eqref{eq:form:relevement}, \eqref{eq:form:relevement:pr}
et des définitions.
La démonstration des autres points occupe la section suivante.

\section{Démonstration du théorème \ref{thm:main}}

\subsection{Démonstration du point 2 du théorème \ref{thm:main}}
Supposons $k$ fini.
D'après \eqref{eq:defn0}, \eqref{eq:def:scn},
\eqref{eq:theta} et \eqref{eq:adj}, on a
\begin{equation}
q^{(\gc-1)\dim(X)+\acc{y}{\can{X}}}
n^J_0(y)
=
\sumsub{
\becG\in \diveffc^J
\\
\deg(\ecG_j)\leq \acc{y}{\scG_j},\quad j\in J
}
q^{-\sum_{i\in I}\acc{y}{\scF_i}}
\scM((\acc{y}{\scF_i}),\becG)
q^{-\abs{\deg(\becG)}}
\end{equation}
et donc d'après le point
\ref{item:prop:res:sergen:kfini}
de la proposition \ref{prop:res:sergen}
on a pour tout $\eps>0$ assez petit
\begin{align}
\abs{
q^{\,\acc{y}{\can{X}}}
n^J_0(y)
-
\gamma(X)
}
\hskip-0.3\textwidth&\\
&
\ll_{\eps}
\sum_{i\in I}
q^{-\eps \acc{y}{\scF_i}}
\sum_{\becG\in \diveffc^J_{\leq 1}}
\left(
\prod_{v,v(\becG)\neq 0}C.q_v^{\abs{v(\becG)}-2+\eps}
\right)
q^{-\abs{\deg(\becG)}}
\notag
\\
&
\quad\quad+q^{-\eps \sum_{j\in J}\acc{y}{\scG_j}}
&
\\
&
\ll_{\eps}
q^{-\eps \sum_{j\in J}\acc{y}{\scG_j}}
+
\sum_{i\in I}
q^{-\eps \acc{y}{\scF_i}}.
\end{align}
Ceci montre le résultat pour $n^J_0$.

~

Supposons à présent $k=\bQ$. On a
\begin{equation}
p^{\,r\left[(\gc-1)\dim(X)+\acc{y}{\can{X}}\right]}
n^J_{0,p,r}(y)
=
\sumsub{\becG\in \diveffcpr^J
\\
\deg(\ecG_j)\leq\acc{y}{\scG_j},\quad j\in J
}
p^{\,-r\sum_{i\in I}\acc{y}{\scF_i}}
\scM_{p,r}((\acc{y}{\scF_i}),\becG)
p^{\,-r\abs{\deg(\becG)}}
\end{equation}
et donc d'après le point \ref{item:prop:res:sergen:kQ} 
de la proposition \ref{prop:res:sergen}, 
pour tout $\eps>0$ assez petit,
\begin{equation}
\abs{
p^{\,r\left[(\gc-1)\dim(X)+\acc{y}{\can{X}}\right]}
n^J_{0,p,r}(y)
-c(p,r)
}
\end{equation}
est majoré par 
\begin{equation}
C(p,r,\eps)
\Big[
\sum_{i\in I}p^{-r\eps\acc{y}{\scF_i}}
+
\sumsub{\becG\in \diveffcpr^J_{\leq 1}
\\
\becG\neq (0,\dots,0)
}
\left(
\prod_{v,v(\becG)\neq 0}
C.p_v^{\,r\,\abs{v(\becG)}-2+\eps}
\right)
p^{-r\abs{\deg(\becG)}}.
\Big]
\end{equation}
avec $\lim_{r\to +\infty}C(p,r,\eps)=1$.
D'après \cite[Lemma 5.2]{Bou:moduli}, la dernière somme tend vers $0$
quand $r\to +\infty$. On en déduit le résultat pour $n^J_{0,p,r}$.

\subsection{Démonstration du point 3 du théorème \ref{thm:main}}
Supposons $k$ fini.
On a 
\begin{equation}
\abs{n^J_1(y)}
\ll
\sum^y_{
\exists j\in J,\,\,
\psi_j<0} 
\abs{\nuXJ}.q^{\Theta}
+
\sum_{\jz\in J}
\,\,
\sumsuby{
\varphi_{\jz}\geq 2\,\gc-1\\
\forall j\in J,
\,\, 
\psi_j\geq 0} 
\abs{\nuXJ}.
\abs{q^{(\gc-1)\dim(X)+\Theta}-\fonc^{\ast}}
\end{equation}
et pour tout $\eta\leq 1$
\begin{equation}
\sum^y_{
\exists j\in J,
\,\,
\psi_j<0} 
\abs{\nuXJ}.q^{\,\Theta}
\ll
\sum_{\jz\in J}
\sum^y
\abs{\nuXJ}.
q^{\,\varphi_{\jz}+(1-\eta)\psi_{\jz}}.
\end{equation}
Par ailleurs, d'après le point 
\ref{item:lm:compt:1} 
du lemme \ref{lm:compt}, on a
\begin{multline}
\sumsuby{
\varphi_{\jz}\geq 2\,\gc-1\\
\forall j\in J,\quad \psi_j\geq 0
} 
\abs{\nuXJ}.
\abs{q^{(\gc-1)\dim(X)+\Theta}-\fonc^{\ast}}
\\
=
\sumsuby{
\varphi_{\jz}\geq 2\,\gc-1\\
\forall j\in J,\quad \psi_j\geq 0
\\
\psi_{\jz}\geq 2\,\gc-1}  
\abs{\nuXJ}.\abs{-1+\sum_{j\in J} \fonc_j}
+
\sumsuby{
\varphi_{\jz} \geq 2\,\gc-1\\
0\leq \psi_{\jz}\leq 2\,\gc-2} 
\abs{\nuXJ}
\abs{q^{\,(1-\gc)\dim(X)+\Theta}-\fonc^{\ast}}.
\end{multline}
En utilisant le point \ref{item:lm:compt:2}
du lemme \ref{lm:compt}
on obtient pour tout $\eta\leq 1$ 
\begin{align}
\sumsuby{
\varphi_{\jz}\geq 2\,\gc-1\\
\forall j\in J,\quad \psi_j\geq 0
\\
\psi_{\jz\geq 0}
}  
\abs{\nuXJ}.\abs{-1+\sum_{j\in J} \fonc_j}
&
\ll
\sum_{j\in J}
\,\,
\sumsuby{\forall j\in J,\,\,\psi_j\geq 0}
\abs{\nuXJ}
q^{\,\varphi_{\jz}}
\\
&
\ll
\sum_{j\in J}
\,\,
\sum^y
\abs{\nuXJ}
q^{\varphi_{\jz}+(1-\eta)\psi_{\jz}}.
\end{align}
Finalement, en utilisant le point \ref{item:lm:compt:dern} du lemme 
\ref{lm:compt} on obtient pour $\eta\geq 0$
\begin{equation}
\sumsuby{
\varphi_{\jz} \geq 2\,\gc-1\\
0\leq \psi_{\jz}\leq 2\,\gc-2} 
\abs{\nuXJ}
\abs{q^{(1-\gc)\dim(X)+\Theta}-\fonc^{\ast}}
\ll_{\eta}
\sum^y 
\abs{\nuXJ}q^{\varphi_{\jz}+(1-\eta)\psi_{\jz}}.
\end{equation}
Pour $0\leq \eta\leq 1$ on obtient en définitive
\begin{equation}
\abs{n^J_1(y)}\ll_{\eta}\sum_{j\in J} \sum^y \abs{\nuXJ}.q^{\,\varphi_j+(1-\eta)\psi_j}.
\end{equation}
Notons que d'après \eqref{eq:defjzeta}, \eqref{eq:psijz},
\eqref{eq:phijz}
et \eqref{eq:adj}
on a pour tout $\eta$ et tout $j\in J$
\begin{multline}
q^{\acc{y}{\can{X}}}
\sum^y \abs{\nuXJ}.q^{\,\varphi_j+(1-\eta)\psi_j}
\\
=
q^{-\eta\acc{y}{\scG_j}} 
\sum_{\becG\in \diveffc^J_{\leq 1}}
q^{-\sum_{i\in I}\acc{y}{\scF_i}}
\scM_{_{j,\eta}}(\acc{y}{\scF_i},\becG)
q^{-\abs{\deg(\becG)}+\eta\deg(\ecG_j)}.
\end{multline}
En appliquant le point \ref{item:prop:res:sergen:kfini} de la
proposition \ref{prop:res:sergen},
pour tout $\eps>0$ assez petit, on obtient pour tout $\eta>0$
assez petit la majoration
\begin{equation}
q^{\acc{y}{\can{X}}}\abs{n^J_1(y)} 
\ll_{\eps,\eta} 
\sum_{j\in J}q^{-\eta\acc{y}{\scG_j}}
\end{equation}
d'où le résultat.

Supposons $k=\bQ$.
Par un raisonnement analogue à ci-dessus, on a
pour tout $\eta$
\begin{equation}
\abs{n^J_{1,p,r}(y)}\ll_{\eta}p^{r\,(\dim(X)+\eta (2\,\gc-2))}
\sum_{j\in J} \sum^y \abs{\nuXJpr}.p^{r(\varphi_j+(1-\eta)\psi_j)}
\end{equation}
Pour tout $j\in J$ et tout $\eps>0$ assez petit, en appliquant le point 
\ref{item:prop:res:sergen:kQ}
de la proposition 
\ref{prop:res:sergen},
on obtient pour $\eta>0$ assez petit 
\begin{align}
p^{\,r((\gc-1)\dim(X)+\acc{y}{\can{X}})}
p^{r\,(\dim(X)+\eta (2\,\gc-2))}
\sum^y \abs{\nuXJpr}.p^{\,r(\varphi_j+(1-\eta)\psi_j)}
\hskip-0.7\textwidth
&
\\
&
=
p^{r(\gc\dim(X)+\eta(2\,\gc-2)-\eta\acc{y}{\scG_j})} p^{-r\sum_{i\in I}\acc{y}{\scF_i}}
\notag
\\
&
\quad 
\times
\sum_{\becG\in \diveffcpr^J_{\leq 1}}
\scM_{j,\eta,p,r}(\acc{y}{\scF_i},\becG)
p^{-r(\abs{\deg(\becG)}+\eta\deg(\ecG_j))}
\\
&
\leq
C(p,r,\eps,\eta)\,p^{r(\gc\dim(X)+\eta(2\,\gc-2)-\eta\acc{y}{\scG_j})}
\end{align}
avec $\lim_{r\to +\infty}C(p,r,\eps,\eta)=1$ (on a encore utilisé \cite[Lemma 5.2]{Bou:moduli})
d'où le résultat (rappelons que l'on peut exprimer explicitement un
$\eta$ convenable en fonction de $X$).

\subsection{Démonstration du point 4 du théorème \ref{thm:main}}
Supposons $k$ fini. 
\'Ecrivons   $J=\{\jz,\ju,\jd\}$.
D'après les points \ref{item:lm:compt:avant:dern}
et \ref{item:lm:compt:dern}
du lemme \ref{lm:compt}, on a la majoration
\begin{equation}\label{eq:maj:grossiere}
\abs{n^J_2(y)}
\ll
\sum^y\abs{\nuXJ}.q^{\psi_{\jz}}.
\end{equation}
Soit $y$ vérifiant $\acc{y}{\scG_j}\leq \deg(\ecG_j)$ pour tout $j\in J$.
D'après \eqref{eq:psijz}, on a pour $\lambda>0$ 
\begin{multline}\label{eq:maj:psi1:bis}
\psi_{\jz}(y,\becG,\becD)
\leq
\acc{y}{\sum_{j\in J} \scG_j-\Dtot}
-\acc{y}{(1-\lambda)\,\scG_{\ju}+\scG_{\jd}-\Dtot}
-\deg(\ecG_1)
\\
+\deg \pgcdu{j\in J}\left(\ecG_j+\sum_{i\in I_j} b_{i,j}\ecD_i\right)
-\lambda(\deg(\ecG_{\ju})+\deg(\ecG_{\jd}))
-\deg(\pgcd(\ecG_{\ju},\ecG_{\jd})).
\end{multline}
En utilisant \eqref{eq:maj:ceps}, on obtient pour tout $\eps>0$ assez
petit la majoration
\begin{multline}\label{eq:expr:psi:bis}
q^{\,\acc{y}{\can{X}}}\,
\sum^y\abs{\nuXJ}.q^{\psi_{\jz}}
\\
\ll_{\eps}
\sum_{
\substack{
\becG\in \diveffc^J_{\leq 1}
}
}
\left(
\prod_{v,v(\becG)\neq 0}
C.
q_v^{\abs{v(\becG)}-2+\eps}
\right)
q^{-\deg(\ecG_{\jz})-\deg(\pgcd(\ecG_{\ju},\ecG_{\jd}))-\lambda(
    \deg(\ecG_{\ju})-\deg(\ecG_{\jd}))}\,
\\
\times 
q^{-\acc{y}{(1-\lambda)\scG_{\ju}+\scG_{\jd}-\Dtot}}
\end{multline}
On vérifie aussitôt qu'on a pour $\bg\in \{0,1\}^J$
et $\eps>0$ assez petit
\begin{equation}\label{eq:la:maj:bis}
\abs{\bg}-2+\eps-g_{\jz}-\Min(g_{\ju},g_{\jd})-\lambda(g_{\ju}+g_{\jd})<-1
\end{equation}
d'où la convergence de la série apparaissant dans \eqref{eq:expr:psi:bis}
et la majoration 
\begin{equation}
q^{\,\acc{y}{\can{X}}}\abs{n^J_2(y)}\leq_{\eps,\lambda}
q^{-\acc{y}{(1-\lambda)\scG_2+\scG_3-\Dtot}}.
\end{equation}
Si $k=\bQ$, on a de même la majoration
\begin{equation}\label{eq:maj:grossiere:pr}
p^{\,r[(1-\gc)\dim(X)]}
\abs{n^J_{2,p,r}(y)}
\ll
p^{\,r\,(\gc\dim(X)+2\,\gc-2)}
\sum^y\abs{\nuXJpr}.p^{r\,\psi_{\jz}}.
\end{equation}
et un raisonnement analogue permet de conclure.

\section{Appendice~: démonstration du quatrième point de la
  proposition \ref{prop:res:sergen}
}
On commence par remarquer que
$
F_{\jz,2,\bg}(\rho,\tau,\bt)
$
est la somme des trois séries suivantes~:
\begin{equation}\label{eq:series:1}
\sumsub{
\bbf\in \bN^I
\\
(0,\bbf)\in \Nindprim
\\
\Minu{j\in J\setminus \{\jz\}}(g_j+\sum_{i\in I_j} a_{i}f_i)\geq 2
\\
g_{\jz}+\sum_{i\in I_{\jz}} a_{i}f_i=0
}
\abs{\nuzXJ (\bg,\bbf)}
(
\tau^{\,\Minu{j\in J\setminus \{\jz\}}(g_j+\sum_{i\in I_j} a_{i}f_i)}
-\tau
)
\bt^{\bbf},
\end{equation}
\begin{equation}\label{eq:series:2}
\sumsub{
\bbf\in \bN^I
\\
(0,\bbf)\in \Nindprim
\\
\Minu{j\in J\setminus \{\jz\}}(g_j+\sum_{i\in I_j} a_{i}f_i)\geq 2
\\
g_{\jz}+\sum_{i\in I_{\jz}} a_{i}f_i=1
}
\abs{\nuzXJ (\bg,\bbf)}
(
\rho\,\tau^{\,\Minu{j\in J\setminus \{\jz\}}(g_j+\sum_{i\in I_j} a_{i}f_i)}
-\rho\,\tau
)
\bt^{\bbf},
\end{equation}
et
\begin{equation}\label{eq:series:3}
\sumsub{
\bbf\in \bN^I
\\
(0,\bbf)\in \Nindprim
\\
\Minu{j\in J}(g_j+\sum_{i\in I_j} a_{i}f_i)\geq 2
}
\abs{\nuzXJ (\bg,\bbf)}
(
\rho^{\,\Minu{j\in J}(g_j+\sum_{i\in I_j} a_{i}f_i)}\,
\tau^{\,\Minu{j\in  J\setminus \{\jz\}}(g_j+\sum_{i\in I_j} a_{i}f_i)}
-\rho\,\tau
)
\bt^{\bbf}.
\end{equation}
Il s'agit de montrer que ces trois séries sont
$\abs{\bg}$-contrôlées. 
\paragraph{Le cas de \eqref{eq:series:1}}
On peut bien entendu supposer qu'on a $g_{\jz}=0$
Soit $\bbf\in \bN^I$ vérifiant les conditions sous le signe somme dans \eqref{eq:series:1}.
D'après l'hypothèse \eqref{hyp:noninter},
on est alors nécessairement dans l'une des deux situations suivantes~:
\begin{itemize}
\item
il existe $K\in \classadminc$ de cardinal $\#J-1$
tel que $K\cap I_j$ est
un singleton pour tout $j\in J\setminus \{\jz\}$ 
et $f_i\geq 1$ si et seulement si $i\in K$
ou $i\notin \disju{j\in J}I_j$ et  
$\{i\}\cup K\in \classadminc$~;
\item
il existe $K\in \classadminc$ de cardinal $\#J$ et $\ju \in J\setminus
\{\jz\}$
tel que $\# (K\cap I_{\ju })=2$ (posons alors $\{\iu,\ideux\}\eqdef K\cap I_{\ju }$),
$K\cap I_j$ est un singleton pour $j\notin \{\ju ,\jz\}$ 
et $f_i=1$ si et seulement si $i\in K$. 
\end{itemize}
\eqref{eq:series:1} se réécrit ainsi comme une somme sur tous les
ensembles $K$ du premier type des séries
\begin{equation}\label{eq:series:1:A}
\sumsub{
i\notin \disju{j\in J}I_j, 
\\
K\cup \{i\}\in \classadminc
}
\abs{\nuzXJ (\bg,\bbf_{K\cup \{i\}})}
\frac{t_i}{1-t_i}
\sumsub{
\bbf\in \bN^{J\setminus \{\jz\}}_{>0}
\\
\forall j\in J_1,\,\,g_j=0\imply f_j\geq 2
}
(
\tau^{\,\Minu{j\in J\setminus \{\jz\}}(g_{j}+a_j f_j)}
-
\tau
)
\bt^{\bbf}
\end{equation}
et
\begin{equation}\label{eq:series:1:B}
\abs{\nuzXJ (\bg,\bbf_{K})}
\sumsub{
\bbf\in \bN^{J\setminus \{\jz\}}_{>0}
\\
\forall j\in J_1,\,\,g_j=0\imply f_j\geq 2
}
(
\tau^{\,\Minu{j\in J\setminus \{\jz\}}(g_{j}+a_j f_j)}
-
\tau
)
\bt^{\bbf}
\end{equation}
(on a identifié $K$ à $J\setminus \{\jz\}$ et noté
$J_1=\{j\in J\setminus \{\jz\},\,a_j=1\}$),
plus une somme sur tous les ensembles $K$ du second type des séries
\begin{equation}\label{eq:series:1:C}
\abs{\nuzXJ (\bg,\bbf_K)}
\sumsub{
\bbf\in \bN^{
J\setminus \{\jz,\ju\}
}_{>0}
\\
\forall j\in J_1,\,\,g_j=0\imply f_j\geq 2
}
(
\tau^{\,\Min(g_{\ju }+a_{\iu }f_{\iu }+a_{\ideux }f_{\ideux },(g_{j}+a_j f_j)_{{j\in J\setminus \{\jz,\ju \}}})}
-
\tau
)
\bt^{\bbf}
\end{equation}
(on a identifié $K\setminus \{\iu ,\ideux \}$ à 
$J\setminus \{\jz,\ju\}$ et noté
$J_1=\{j\in J\setminus \{\jz,\ju\},\,a_j=1\}$).

D'après la proposition \ref{prop:calc:sergen},
la somme sur 
$\bbf\in \bN^{J\setminus \{\jz\}}_{>0}$ dans 
\eqref{eq:series:1:A} et \eqref{eq:series:1:B} vaut, en 
notant $m$ le plus petit commun multiple des $(a_j)_{j\in J\setminus  \{\jz\}}$, 
\begin{equation}\label{eq:premier:type}
\frac{
\tau
\produ{j\in J_1}t_j^{2-g_j}
\produ{j\in 
J\setminus  (\{\jz\}\cup J_1)}
t_j
}
{
\produ{j\in J\setminus \{\jz\}} (1-t_j)
}
\left[
\frac{
\tau\,\wt{\cF}(\tau,(a_j)_{j\in J\setminus
    \{\jz\}},\bnu,\bt)}
{
(1-\tau^{\,m}\,\produ{j\in J\setminus \{\jz\}}  t_j^{\frac{m}{a_j}})
}
-
1
\right]
\end{equation}
avec $\bnu=((0,\dots,0)_{j\in J_1},(g_j+a_j-2)_{j\in J\setminus (\{\jz\}\cup J_1)})$.

Toujours d'après la proposition \ref{prop:calc:sergen},
la somme sur
$\bbf\in \bN^{
J\setminus \{\jz,\ju\}
}_{>0}
$
dans 
\eqref{eq:series:1:C}
vaut,
en notant $m_1$ le plus petit commun multiple des 
$\{a_{\,\iu },(a_j)_{j\in J\setminus  \{\jz,\ju\}}\}$, 
et $m_2$ le plus petit commun multiple des 
$\{a_{\ideux },(a_j)_{j\in J\setminus  \{\jz,\ju\}}\}$, 
\begin{equation}\label{eq:second:type}
\frac{
\tau
\,
t_{\iu }
\,
t_{\ideux }
\produ{j\in J_1}t_j^{2-g_j}
\produ{j\in J\setminus (J_1\cup \{\jz,\ju)\}}t_j
}
{
(1-t_{\iu })(1-t_{\ideux })
\produ{j\in J\setminus \{\jz,\ju\}} (1-t_j)
}
\left[
\frac{\rho\,\wt{\cF}(\tau,(a_{\iu },a_{\ideux },\{a_j\}_{j\in J\setminus \{\jz,\ju\}}),\bnu,\bt)}{
(1-\tau^{\,m_1}\,t_{\iu }^{\frac{m_1}{a_{\iu }}}
\produ{j\in J\setminus \{\jz,\ju\}}  t_j^{\frac{m_1}{a_j}})
(1-\tau^{\,m_2}\,
t_{\ideux }^{\frac{m_2}{a_{\ideux }}}
\produ{j\in J\setminus \{\jz,\ju\}}  t_j^{\frac{m_2}{a_j}})}
-1
\right]
\end{equation}
avec 
$\bnu=((0,\dots,0)_{j\in J_1},(g_j+a_j-2)_{j\in J\setminus (J_1\cup \{\jz,\ju\})},(g_{\ju}+a_{\iu }+a_{\ideux }-2))$.
Compte tenu de $\# J\geq 3$ et de la proposition \ref{prop:calc:sergen}, 
on vérifie facilement que \eqref{eq:premier:type} et \eqref{eq:second:type} sont $\abs{\bg}$-contrôlée.
\paragraph{Le cas de \eqref{eq:series:3}}
En raisonnant comme pour le deuxième point de la proposition 
\ref{prop:res:sergen},
on voit qu'il suffit de montrer que pour tout $K\in \scI$  
la série 
\begin{equation}\label{eq:serie}
\abs{\nuzXJ (\bg,\bbf_K)}
\sumsub{
\bbf\in \bN^J_{>0}
\\
\forall j\in J_1,\,\,g_j=0\imply f_j\geq 2
}
(
\rho^{\,\Minu{j\in J}(g_j+a_{j}f_j)}
\tau^{
\eps\,\Minu{j\in
    J\setminus \{\jz\}}(g_j+ a_{j}f_j)
}
-\rho\tau
)
\bt^{\bbf}
\end{equation}
(on a identifié $K$ à $J$ et posé $J_1\eqdef\{j\in J, a_j=1\}$)
est $\abs{\bg}$-contrôlée. Rappelons que par hypothèse $J_1$ est non vide.
Notons $m$ le plus petit commun
multiple de la famille $(a_j)_{j\in J}$ et 
$n$ le plus petit commun
multiple de la famille $(a_j)_{j\in J\setminus \{\jz\}}$.
D'après la proposition \ref{prop:calc:sergen}, \eqref{eq:serie} s'écrit,
modulo le facteur $\nuzXJ $,
\begin{equation}\label{eq:series:bis}
\frac{
\rho\tau
\prod_{j\in J_1} t_j^{2-g_j}
\prod_{j\notin J\setminus J_1} t_j
}
{
\produ{j\in J} (1-t_j)
}
\left[
\frac{\tau\,\,\wt{\cG}(\jz,\rho,\tau,(a_j)_{j\in J},\bnu,\bt)}{
(1-\rho^{\,m}\tau^{\, n}\,\produ{j\in J}  t_j^{\frac{m}{a_j}})
(1-\tau^{\,n}\,\produ{j\in J\setminus \{\jz\}}  
t_j^{\frac{n}{a_j}})
}
-
1
\right]
\end{equation}
avec $\bnu=((0,\dots,0)_{j\in J_1},(g_j+a_j-2)_{j\notin J_1})$.
D'après la proposition \ref{prop:calc:sergen},
on voit facilement que \eqref{eq:series:bis} est $\abs{\bg}$-contrôlée.
\paragraph{Le cas de \eqref{eq:series:2}}
Dans le cas où $g_{\jz}=1$, on se ramène facilement au cas de
\eqref{eq:series:1}.
Dans le cas où $g_{\jz}=0$ 
on voit qu'il suffit de montrer que pour tout $K\in \scI$  
contenant $\jz$ la série
\begin{equation}\label{eq:serie:bis}
\abs{\nuzXJ (\bg,\bbf_K)}
t_{\jz}
\sumsub{
\bbf\in \bN^{J\setminus \{\jz\}}_{>0}
\\
\forall j\in J_1,\,\,g_j=0\imply f_j\geq 2
}
(
\rho
\tau^{\,
\Minu{j\in
    J\setminus \{\jz\}}(g_j+ a_{j}f_j)
}
-\rho\tau
)
\bt
\end{equation}
(on a identifié $K$ à $J$ et posé $J_1\eqdef\{j\in J\setminus \{\jz\}, a_j=1\}$)
est $\abs{\bg}$-contrôlée. Comme précédemment, ceci découle aisément d'une
application de la proposition \ref{prop:calc:sergen}.

\bibliographystyle{alpha}

\begin{thebibliography}{}

\end{thebibliography}


\begin{thebibliography}{Bou11b}

\bibitem[BB07]{dlBBr:sdP4I}
R.~de~la Bret{\`e}che \& T.~D. Browning.
\newblock On {M}anin's conjecture for singular del {P}ezzo surfaces of degree
  4. {I}.
\newblock {\em Michigan Math. J.}, 55(1):51--80, 2007.

\bibitem[BBD07]{dlBBD}
R{\'e}gis de~la Bret{\`e}che, Tim~D. Browning \& Ulrich Derenthal.
\newblock On {M}anin's conjecture for a certain singular cubic surface.
\newblock {\em Ann. Sci. \'Ecole Norm. Sup. (4)}, 40(1):1--50, 2007.

\bibitem[BD09a]{BroDer:DP4A4}
Tim~D. Browning \& Ulrich Derenthal.
\newblock Manin's conjecture for a quartic del {P}ezzo surface with {$A_4$}
  singularity.
\newblock {\em Ann. Inst. Fourier (Grenoble)}, 59(3):1231--1265, 2009.

\bibitem[BD09b]{BroDer:dP3D5}
Timothy~Daniel Browning \& Ulrich Derenthal.
\newblock Manin's conjecture for a cubic surface with {$D_5$} singularity.
\newblock {\em Int. Math. Res. Not. IMRN}, (14):2620--2647, 2009.

\bibitem[BH07]{BerHau:Cox}
Florian Berchtold \& J{\"u}rgen Hausen.
\newblock Cox rings and combinatorics.
\newblock {\em Trans. Amer. Math. Soc.}, 359(3):1205--1252 (electronic), 2007.

\bibitem[Bou09a]{Bou:compt}
David Bourqui.
\newblock Comptage de courbes sur le plan projectif \'eclat\'e en trois points
  align\'es.
\newblock {\em Ann. Inst. Fourier (Grenoble)}, 59(5):1847--1895, 2009.

\bibitem[Bou09b]{Bou:prod:eul:mot}
David Bourqui.
\newblock Produit eul\'erien motivique et courbes rationnelles sur les
  vari\'et\'es toriques.
\newblock {\em Compos. Math.}, 145(6):1360--1400, 2009.

\bibitem[Bou11a]{Bou:fam}
David Bourqui.
\newblock La conjecture de {M}anin g{\'e}om{\'e}trique pour une famille de
  quadriques intrins{\`e}ques.
\newblock {\em Manuscr. Math.}, 135(1-2):1--41, 2011.

\bibitem[Bou11b]{Bou:moduli}
David Bourqui.
\newblock Moduli spaces of curves and {C}ox rings.
\newblock à paraître au {\em Michigan Math. J.}, 2011.

\bibitem[Deb01]{Deb:higher}
Olivier Debarre.
\newblock {\em Higher-dimensional algebraic geometry}.
\newblock Universitext. Springer-Verlag, New York, 2001.

\bibitem[Der06]{Der:sdp:ut:hyp}
Ulrich Derenthal.
\newblock Singular {D}el {P}ezzo surfaces whose universal torsors are
  hypersurfaces.
\newblock {\url{arXiv:math/0604194v1}}, 2006.

\bibitem[Der09]{Der:int}
Ulrich Derenthal.
\newblock Counting integral points on universal torsors.
\newblock {\em Int. Math. Res. Not. IMRN}, (14):2648--2699, 2009.

\bibitem[DT07]{DerTsc:univ2007}
Ulrich Derenthal \& Yuri Tschinkel.
\newblock Universal torsors over del {P}ezzo surfaces and rational points.
\newblock In {\em Equidistribution in number theory, an introduction}, volume
  237 of {\em NATO Sci. Ser. II Math. Phys. Chem.}, pages 169--196. Springer,
  Dordrecht, 2007.

\bibitem[Fra09]{Fra:conv}
Matthias Franz.
\newblock Convex - a {M}aple package for convex geometry - version 1.1.3, 2009.
\newblock 2009.

\bibitem[Has04]{Has:eq:ut:cox:rings}
Brendan Hassett.
\newblock Equations of universal torsors and {C}ox rings.
\newblock In {\em Mathematisches {I}nstitut, {G}eorg-{A}ugust-{U}niversit\"at
  {G}\"ottingen: {S}eminars {S}ummer {T}erm 2004}, pages 135--143.
  Universit\"atsdrucke G\"ottingen, G\"ottingen, 2004.

\bibitem[LB11]{LeBou:dP32A2A1:2011}
Pierre Le~Boudec.
\newblock Manin's conjecture for a cubic surface with {$2A_2+A_1$} singularity
  type.
\newblock pr{\'e}publication {\url{arXiv:1105.3495}}, 2011.

\bibitem[LB12]{LeBou:dP43A1A1A2:2012}
Pierre Le~Boudec.
\newblock Manin's conjecture for two quartic del {P}ezzo surfaces with {$3A_1$}
  and {$A_1+A_2$} singularity types.
\newblock {\em Acta Arith.}, 151(2):109--163, 2012.

\bibitem[Lou10]{Lou:dP6A2:2010}
Daniel Loughran.
\newblock Manin's conjecture for a singular sextic del {P}ezzo surface.
\newblock {\em J. Th\'eor. Nombres Bordeaux}, 22(3):675--701, 2010.

\bibitem[Pey95]{Pey:duke}
Emmanuel Peyre.
\newblock Hauteurs et mesures de {T}amagawa sur les vari\'et\'es de {F}ano.
\newblock {\em Duke Math. J.}, 79(1):101--218, 1995.

\bibitem[Pey03]{Pey:var_drap}
Emmanuel Peyre.
\newblock Points de hauteur born{\'e}e sur les vari{\'e}t{\'e}s de drapeaux en
  caract{\'e}ristique finie.
\newblock {\url{arXiv:math/0303067v1}}, 2003.

\end{thebibliography}

\end{document}